\titleformat{\subsection}[runin]{\normalfont\bfseries}{\thesubsection.}{.5em}{}[.]\titlespacing{\subsection}{0pt}{2ex plus .1ex minus .2ex}{.8em}
\titleformat{\subsubsection}[runin]{\normalfont\itshape}{\thesubsubsection.}{.3em}{}[.]\titlespacing{\subsubsection}{0pt}{1ex plus .1ex minus .2ex}{.5em}
\definecolor{darkred}{rgb}{0.9,0,0.3}
\definecolor{darkblue}{rgb}{0,0.3,0.9}
\def\comment#1{\ifthenelse{\isodd{\value{page}}}{\marginpar{\raggedright\scriptsize{\textcolor{darkred}{#1}}}}{\marginpar{\raggedleft\scriptsize{\textcolor{darkred}{#1}}}}}  
\numberwithin{equation}{section}
\numberwithin{figure}{section}
\theoremstyle{plain} 
\newtheorem{theorem}{Theorem}[section]
\newtheorem*{theorem*}{Theorem}
\newtheorem{lemma}[theorem]{Lemma}
\newtheorem*{lemma*}{Lemma}
\newtheorem*{corollary*}{Corollary}
\newtheorem{proposition}[theorem]{Proposition}
\newtheorem*{proposition*}{Proposition}
\newtheorem{definition}[theorem]{Definition}
\newtheorem*{definition*}{Definition}
\newtheorem*{conjecture*}{Conjecture}
\theoremstyle{definition} 
\newtheorem*{example*}{Example}
\newtheorem{remark}[theorem]{Remark}
\newtheorem*{remark*}{Remark}
\newcommand{\E}{\mathbb{E}}
\renewcommand{\leq}{\leqslant}
\renewcommand{\geq}{\geqslant}
\renewcommand{\epsilon}{\varepsilon}
\DeclareMathOperator{\mesh}{mesh}
\title{An asymptotic radius of convergence for the Loewner equation and simulation of $SLE_k$ traces via splitting}
\author{James Foster, Terry Lyons and Vlad Margarint}
\newcommand\shorttitle{A radius of convergence and simulation of the Loewner equation}
\newcommand\authors{J. Foster, T. Lyons and V. Margarint}
\ifodd\value{page}
\shorttitle
\authors
\begin{document}

\maketitle

\begin{abstract}
In this paper, we shall study the convergence of Taylor approximations for the
backward Loewner differential equation (driven by Brownian motion) near the origin. 
More concretely, whenever the initial condition of the backward Loewner equation (which lies in the upper half plane) is small and has the form  $Z_{0} = \epsilon i$, we show these
approximations exhibit an $O(\epsilon)$ error provided the time horizon is $\epsilon^{2+\delta}$ for $\delta > 0$.
Statements of this theorem will be given using both rough path and $L^{2}(\mathbb{P})$ estimates.
Furthermore, over the time horizon of $\epsilon^{2-\delta}$, we shall see that ``higher degree'' terms within the Taylor expansion become larger than ``lower degree'' terms for small $\epsilon$.
In this sense, the time horizon on which approximations are accurate scales like $\epsilon^{2}$.
This scaling comes naturally from the Loewner equation when growing vector field
derivatives are balanced against decaying iterated integrals of the Brownian motion.
As well as being of theoretical interest, this scaling may be used as a guiding principle for developing adaptive step size strategies which perform efficiently near the origin.
In addition, this result highlights the limitations of using stochastic Taylor methods
(such as the Euler-Maruyama and Milstein methods) for approximating $SLE_{\kappa}$ traces. 
Due to the analytically tractable vector fields of the Loewner equation, we will show
Ninomiya-Victoir (or Strang) splitting is particularly well suited for SLE simulation.
As the singularity at the origin can lead to large numerical errors, we shall employ
the adaptive step size proposed in \cite{kennedy2009slesim} to discretize $SLE_{\kappa}$ traces using this splitting. We believe that the Ninomiya-Victoir scheme is the first high order numerical method that has been successfully applied to $SLE_{\kappa}$ traces.
\end{abstract}

\section{Introduction}
Rough Path Theory was first introduced in 1998 by Terry Lyons in \cite{lyons1998differential}. The theory provides a deterministic platform to study stochastic differential equations which extends both Young's integration and stochastic integration theory beyond regular functions and semimartingales. In addition, rough path theory provides a methodology for constructing solutions to differential equations driven by paths that are not of bounded variation but have controlled roughness. Step by step, we introduce the ingredients and terminology necessary to characterize the roughness of a path and to give precise meaning to natural objects that appear in the study of rough paths. The Schramm-Loewner evolution, or $SLE_\kappa$ is a one parameter family of random planar fractal curves introduced by Schramm in \cite{schramm2000scaling}, that are proved to describe scaling limits of a number of discrete models appearing in planar statistical physics. For instance, it was proved in \cite{lawler2011conformal}  that the scaling limit of loop erased random walk (with the loops erased in a chronological order) converges in the scaling limit to $SLE_{\kappa}$ with  $\kappa = 2\,.$ In addition, other two dimensional discrete models from statistical mechanics including Ising model cluster boundaries, Gaussian free field interfaces, percolation on the triangular lattice at critical probability, and uniform spanning trees were proved to converge in the scaling limit to $SLE_{\kappa}$ for values of $\kappa=3,$ $\kappa=4,$ $\kappa=6$ and $\kappa=8$ respectively in the series of works  \cite{lawler2011conformal}, \cite{schramm2009contour}, \cite{smirnov2001critical}  and \cite{smirnov2010conformal}. For a detailed study of SLE theory, we refer the reader to  \cite{lawler2008conformally} and \cite{rohde2011basic}.

Throughout the years, a number of papers have been written at the interface between the aforementioned domains. The paper of Brent Werness \cite{werness2012regularity} defines the expected signature for the $SLE_{\kappa}$ traces, that is the expected values of iterated integrals of the path against itself. This approach provides new ideas about how one can use a version of Green's formula for rough paths and a certain observable for $SLE_{\kappa}$ to compute the first three elements of the expected signature of $SLE_{\kappa}$ in the regime $ \kappa \in [0,4]\,.$ An extension to this computation is provided in \cite{boedihardjo2014uniqueness}, where the authors show ways of computing the fourth grading of the signature (and do it explicitly for $SLE_{8/3}$, where the required observable is known), along with several other parts of the higher grading. From a different perspective, in \cite{friz2017existence} the authors question the existence of the trace for a general class of processes (such as semimartingales) as a driving function in the Loewner differential equation. These ideas are also developed with a Rough Path flavour. More recently, Peter Friz with Huy Tran in \cite{friz2017regularity} revisited the regularity of the $SLE_{\kappa}$ traces and obtained a clear result using Besov spaces type analysis. In \cite{shekhar2019remarks}, Atul Shekhar, Huy Tran and Yilin Wang studied the continuity of the traces generated by Loewner chains driven by bounded variation drivers.

In this paper, we shall use techniques provided by rough path theory to study certain
Taylor approximations of the Loewner differential equation (driven by Brownian motion).
Our motivation is that simulations of $SLE_{\kappa}$ traces are usually done in a pathwise sense.
In particular, as the successful approach of Tom Kennedy in \cite{kennedy2009slesim} discretizes $SLE_\kappa$ with
adaptive step sizes, it is reasonable to investigate the time scales on which the Loewner differential equation can
be well approximated by strong (or pathwise) numerical methods.
The main result of this paper identifies a natural scaling between the initial condition of the
backward Loewner differential equation and the largest time horizon at which stochastic
Taylor approximations are reasonable (i.e. produce small errors). Since this diffusion starts (in a limiting sense) at a singularity at zero in $SLE_\kappa$ theory, we wish to understand
the solution's dynamics when the size of the initial condition $\epsilon = |Z_{0}| > 0$ is small.
We will show that over the small time horizon $\epsilon^{2+\delta}$ for $\delta > 0$, a one-step Taylor approximation of the backward Loewner diffusion will exhibit a local error that is $O(\epsilon)$. Moreover, on a larger time scale of $\epsilon^{2-\delta}$, we will see that the vector field derivatives grow
faster asymptotically than corresponding iterated integrals of Brownian motion and time.

In other words, we provide an analysis of the edge case in Theorem \ref{theoremTaylor}. That is, the critical case whereby over longer time horizons, the Taylor approximation contains large terms (for degree large enough $deg>1/\delta$, for fixed $\delta$ and small $\epsilon$) even when the true Loewner dynamics is small.  In particular, this means that they can't be close to each other -- which is expressed in Remark 3.2. Moreover, in Remark 5.1 we discuss another possible upper bound on the difference. Ultimately, this result highlights the limitations of using stochastic Taylor methods (such as the Euler-Maruyama and Milstein methods) for approximating solutions of the Loewner equation.

Finally, we shall discretize $SLE_{\kappa}$ traces using Ninomiya-Victoir (or Strang) splitting. 
Although this approach will also be limited in terms of pathwise accuracy, it will have additional advantages when one considers the weak convergence of the numerical solution.
This is because the Ninomiya-Victoir scheme achieves a $O(h^{2})$ weak convergence rate (where $h$ denotes the step size used) for SDEs that have sufficiently regular vector fields.
To the best of our knowledge, the methods proposed by Kennedy in \cite{kennedy2009slesim} and analysed by Tran \cite{tran2015convergence} have not been shown to have such high order weak convergence. 
Moreover in our case, this method preserves second and fourth moments of the backward Loewner diffusion.
Further strong convergence results for this splitting of $SLE_{\kappa}$ traces are established in \cite{chen2021sleconvergence}. 
Example code for this method can be found at \href{https://github.com/james-m-foster/sle-simulation}{github.com/james-m-foster/sle-simulation}.\medbreak

\subsection*{Funding and acknowledgements}
The first author was supported by the Department of Mathematical Sciences at the University of Bath and the DataSig programme under the EPSRC grant S026347/1. The second author was supported by the DataSig programme and Alan Turing Institute under EPSRC grant EP/N510129/1. The last author would like to acknowledge the support of ERC (Grant Agreement No.291244 Esig) between 2015-2017 at the OMI Institute, EPSRC 1657722 between 2015-2018, Oxford Mathematical Department Grant and the EPSRC Grant EP/M002896/1 between 2018-2019. In addition, VM acknowledges the support of the NYU-ECNU Institute of Mathematical Sciences at NYU Shanghai. We would also like to thank Ilya Chevyrev, Dmitry Belyaev, Danyu Yang and Weijun Xi for useful suggestions and reading previous versions of this manuscript. 



\section{Rough Path Theory overview}

In this section, we shall highlight the key aspects of Rough Path theory that are utilized within the paper. In particular, more detailed accounts are given in textbooks such as \cite{friz2010multidimensional}.\medbreak

Let $X_{[s,t]}$ denote the restriction of the path $X: [0,T] \to V$ to the compact interval $[s,t]$, where $V$ is a finite dimensional real vector space. We introduce the notion of $p$-variation. 
\begin{definition}
Let $V$ denote a finite dimensional real vector space with dimension $d$ and basis vectors $e_1,\cdots, e_d\,$. The $p$-variation of a path $X: [0,T] \to V $ over $[s,t]$ is defined by 
\begin{align*}
\big\|X_{[s,t]}\big\|_{p-var} \; := \; \sup_{\mathcal{D}=(t_0, t_1,\cdots,t_n) \subset [s,t]}\left( \sum\limits_{i=0}^{n-1}\left\|X_{t_{i+1}} - X_{t_i}\right\|^p \right)^{\frac{1}{p}}\,,
\end{align*}
where the supremum is taken over all finite partitions of the interval $[s,t]\,.$
\end{definition}
Throughout the paper we use the notation $X_{s,t}= X_t-X_s$ for increments of a path.
For $T>0$, let us define $\Delta_T = \big\{ (s,t)\,|\,0 \leq s\leq t \leq T\big\}$
and the important notion of control:
\begin{definition}
 A control on $[0, T]$ is a non-negative continuous function $\omega : \Delta_T \to [0, \infty)$ for which\vspace{-1.5mm}
\begin{align*}
\omega(s,t)+\omega(t,u) \leq \omega(s,u),
\end{align*}
for all $\,0 \leq s \leq t \leq u \leq T, $ and $\omega (t,t) = 0,$  for all $t \in [0,T]\,.$
\end{definition}
We introduce also the following spaces required to define rough paths.
\begin{definition}
Let $T((V)):= \left\{\boldsymbol{a} = (a_{0}, \,a_{1},\, \cdots\,) : a_{n}\in V^{\otimes n}\,\,\,\forall n\geq 0\right\}$ denote the set of formal series of tensors of $V\,.$
\end{definition}
\begin{definition}
The tensor algebra $T(V):= \bigoplus_{k\geq 0}V^{\otimes_k}$ is the infinite sum of all tensor products of $V\,.$
\end{definition}

Suppose that $e_1, e_2, \cdots, e_d $ is a basis for $V\,.$ Then the space $V^{\otimes k}$ is a $d^k$-dimensional vector space that has basis elements of the form $(e_{i_1} \otimes\, e_{i_2}\otimes \cdots \otimes \,e_{i_k})_{(i_1,\cdots,i_k) \in \{ 1,\cdots, d \}^k} \,.$ We store the indices $(i_1, \cdots , i_k) \in \{1,2, \cdots, d \} ^{k}$ in a multi-index $I$ and let $ e_{I}=e_{i_1} \otimes\cdots\otimes e_{i_k}\,.$
The metric $||\cdot||$ on $T((V))$ is the projective norm defined for $$x= \sum_{|I|=k}\lambda_I e_I \in V^{\otimes k}$$ via $$||x||=\sum_{|I|=k}|\lambda_I|.$$  
We consider for $$X: \Delta_T \to T((\mathbb{R}))$$ the collection of iterated integrals as $$(s,t) \to \bold{X}_{s,t}= (1, X_{s,t}^1, \cdots, X_{s,t}^n, \cdots ) \in T((V)).$$  We call the collections of iterated integrals the signature of the path $X$.

We now define the notion of $\textit{multiplicative functional}$.
\begin{definition}
Let $n \geq 1$ be an integer and let $X : \Delta_T \to T^{(n)}(V)$ be a continuous map. Denote by $X_{s,t}$ the image of the interval $(s,t)$ by $X\,,$ and write
$$X_{s,t}=(X_{s,t}^{0}, \cdots, X_{s,t}^{n}) \in \mathbb{R}\oplus V\oplus V^{\otimes 2} \cdots \oplus V^{\otimes n}\,.$$
The function $X$ is called multiplicative functional of degree $n$ in $V$ if $X_{s,t}^0=1$ and for all $(s,t) \in \Delta_t$ it satisfies the so-called ``Chen relation''
$$X_{s,u} \otimes X_{u,t}= X_{s,t} \hspace{2mm} \forall s,u,t \in [0,T]\,.$$
\end{definition}
We will use the notion of $p$-rough path that we define in the following. 
\begin{definition}\label{def1}
A $p$-rough path of degree $n$ is a map $X : \Delta_T \to T^{(n)}(V)$ which satisfies Chen's identity $X_{s,u}\otimes X_{u,s}=X_{s,t}$ and the following 'level dependent' analytic bound
$$\|X_{s,t}^{i}\| \leq \frac{w(s,t)^{\frac{i}{p}}}{\beta_p (\frac{i}{p})!} \,,$$
\end{definition}
\noindent
where $y!=\Gamma(y+1)$ whenever $y$ is a positive real number and $\beta_p$ is a positive constant. 

Since the driving rough path $X$ in this paper will be a standard Brownian motion
coupled with time, we will require estimates for rough paths with different homogeneities.
Hence we use the notation of $\Pi$-rough paths introduced by Lajos Gergely Gyurk\'{o} in \cite{gyurko2016roughpaths}.
We shall give the key definitions and theorems for this when there are two homogeneities.

\begin{definition}
Let $V^{0}$ and $V^{1}$ denote two vector spaces with direct sum $V = V^{0}\oplus V^{1}$.
Then for any multi-index $I=(i_{1},\cdots, i_{l})\in\{0,1\}^{\ast}$, we define the following vector space:
\begin{align*}
V^{\otimes I} & := V^{i_{1}}\otimes\cdots\otimes V^{i_{l}}.
\end{align*}
\end{definition}

\begin{definition}\label{degreedef}
The $(p,q)$-degree of a multi-index $I=(i_{1},\cdots, i_{l})\in\{0,1\}^{\ast}$ is defined as
\begin{align*}
\deg_{(p,q)}(I) := \frac{n}{p} + \frac{m}{q},
\end{align*}
where
\begin{align*}
m & := \#\{\,j : i_{j} = 0, i_{j}\in I\},\\[3pt]
n & := \#\{\,j : i_{j} = 1, i_{j}\in I\}.
\end{align*}
For a fixed $k\geq 1$, we say that a multi-index $I=(i_{1},\cdots, i_{l})\in\{0,1\}^{\ast}$ with $\deg_{(p,q)}(I) \leq k$ is $(p,q)$-maximal if there exists $j\in\{0,1\}$ such that $\deg_{(p,q)}(i_{1},\cdots, i_{l}, j) > k$.\medbreak
\noindent
Using the above, we can introduce a ``\,factorial'' function $\Gamma_{(p,q)}$ on the set of multi-indices,
\begin{align*}
\Gamma_{(p,q)}(I) := \left(\frac{n}{p}\right)!\left(\frac{m}{q}\right)!\,,
\end{align*}
where $I\in\{0,1\}^{\ast}$ is the same multi-index and $(\cdot)!$ denotes the standard Gamma function.
\end{definition}
\begin{definition} Using the $(p,q)$-degree, we can define a truncated tensor algebra as
\begin{align*}
T^{((p,\,q),\,k)}(V) & := \bigoplus_{\deg_{(p,q)}(I)\, \leq\, k}V^{\otimes I}\hspace{2mm}\text{for}\hspace{2mm}n,m\geq 0.
\end{align*}
Then for a fixed element $X\in T^{((p,\,q),\,k)}(V)$ and a multi-index $I=(i_{1},\cdots, i_{l})\in\{0,1\}^{\ast}$ with $\deg_{(p,q)}(I)\, \leq\, k$ , we shall denote $X^{I}$ as the projection of $X$ onto its $V^{\otimes I}$ component.
\end{definition}

\begin{definition}
A $(p,q)$-rough path of degree $k$ is a continuous map $X : \Delta_T \to T^{((p,\,q),\,k)}(V)$ which satisfies Chen's identity $X_{s,u}\otimes X_{u,t}=X_{s,t}$ and the ``\,level dependent'' analytic bound
\begin{align*}
\left\|X_{s,t}^{I}\right\| \leq \frac{w(s,t)^{\deg_{(p,q)}(I)}}{\beta\,\Gamma_{(p,q)}(I)} \,,\hspace{5mm}\text{for all}\hspace{2mm}(s,t)\in \Delta_T,
\end{align*}
where $\beta$ is a positive constant and $I\in\{0,1\}^{\ast}$ is any multi-index with $\deg_{(p,q)}(I) \leq k$.
\end{definition}

\begin{theorem}[Theorem 2.6 of \cite{gyurko2016roughpaths}]\label{gyurko}
Let $X$ denote a $(p,q)$-rough path that has degree $1$.
Then for every $k\geq 1$ there exists a unique $(p,q)$-rough path $\widetilde{X}$ of degree $k$ such that
\begin{align*}
X_{s,t}^{I} = \widetilde{X}_{s,t}^{I}
\end{align*}
for any multi-index $I\in\{0,1\}^{\ast}$ with $\deg_{(p,q)}(I) \leq 1$. Thus, we have the following estimate
\begin{align*}
\big\|\widetilde{X}_{s,t}^I\big\| \leq \frac{w(s,t)^{m+\frac{n}{p}}}{\beta\,m!\big(\frac{n}{p}\big)!}\,,
\end{align*}
where $\beta$ is a positive constant and $I=(i_1,\cdots, i_l)\in\{0,1\}^{\ast}$ is any multi-index such that
\begin{align*}
k & \geq m + \frac{n}{p},
\end{align*}
where $m$ and $n$ are the same non-negative integers given by definition (\ref{degreedef})
\end{theorem}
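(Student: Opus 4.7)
The plan is to adapt Lyons' extension theorem to the mixed-homogeneity $(p,q)$ setting, proceeding by induction on the $(p,q)$-degree. The input at degree $\leq 1$ is given, and at each inductive step I construct $\widetilde{X}^{I}_{s,t}$ for multi-indices $I$ of $(p,q)$-degree equal to some new value $d>1$ in terms of already-constructed lower-degree components, then verify Chen's identity together with the required analytic bound.

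For uniqueness, suppose $\widetilde{X}$ and $\widehat{X}$ are two extensions agreeing at degree $\leq 1$. At the lowest new degree $d>1$, their difference $D^{I}_{s,t}:=\widetilde{X}^{I}_{s,t}-\widehat{X}^{I}_{s,t}$ is additive, since Chen's identity expresses $\widetilde{X}^{I}_{s,t}-\widetilde{X}^{I}_{s,u}-\widetilde{X}^{I}_{u,t}$ as a sum of products of strictly lower-degree components on which the two rough paths agree. The bound $\|D^{I}_{s,t}\|\leq 2\,w(s,t)^{d}/(\beta\,\Gamma_{(p,q)}(I))$ together with $d>1$ and $\omega$-superadditivity then forces $D^{I}\equiv 0$ by the usual telescoping-over-dyadic-partitions argument. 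Iterating in the degree propagates uniqueness to all higher levels.

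For existence, given a partition $\mathcal{D}=(t_{0}<\cdots<t_{N})$ of $[s,t]$, I would define $\widetilde{X}^{I,\mathcal{D}}_{s,t}$ as the unique expression forced by Chen's identity from the lower-degree components on each subinterval. Inserting a new point $u\in[t_{j},t_{j+1}]$ changes $\widetilde{X}^{I,\mathcal{D}}_{s,t}$ by a quantity that is a finite sum of products of strictly lower-degree components on $[t_{j},u]$ and $[u,t_{j+1}]$ whose total $(p,q)$-degree exceeds $1$, giving a control of the form $w(t_{j},t_{j+1})^{\theta}$ with $\theta>1$. A sewing-type estimate then yields convergence as $|\mathcal{D}|\to 0$ to a limit $\widetilde{X}^{I}_{s,t}$; passing to the limit in Chen's identity (which is exact for every finite partition) recovers the multiplicative property of the extension.

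The main obstacle will be establishing the analytic bound with the correct combinatorial factor $\Gamma_{(p,q)}(I)=(n/p)!(m/q)!$ (specialising to $m!(n/p)!$ when $q=1$) and doing so with a single constant $\beta$ uniform in the level. The bound on each partition approximation comes from expanding via Chen as a sum over ordered splittings $I=JK$, where each factor contributes $w^{\deg_{(p,q)}(\cdot)}/\Gamma_{(p,q)}(\cdot)$. Controlling $\sum_{I=JK}1/(\Gamma_{(p,q)}(J)\,\Gamma_{(p,q)}(K))$ by $C/\Gamma_{(p,q)}(I)$ requires a $(p,q)$-variant of Lyons' neo-classical inequality; since the factorials factor over the ``time'' and ``rough'' indices, this reduces to applying the classical neo-classical inequality separately to the $(n/p)!$ and $(m/q)!$ components and then combining through the product structure of the multi-index counts $m,n$. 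Any geometric loss accumulated when iterating the sewing step can be absorbed into $\beta$, so the inductive hypothesis carries the estimate through the limit and completes the extension at every degree up to $k$.
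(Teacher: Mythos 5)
This statement is not proved in the paper at all: it is imported verbatim as Theorem 2.6 of \cite{gyurko2016roughpaths}, so there is no internal proof to compare against. Your sketch follows essentially the same route as the proof in that cited source, namely Lyons' extension theorem adapted to two homogeneities (induction over the attainable $(p,q)$-degrees, uniqueness from additivity of the difference plus superadditivity of $\omega^{\theta}$ with $\theta>1$, existence by partition refinement and a sewing estimate), and you correctly isolate the one genuinely new ingredient, a $(p,q)$-version of the neoclassical inequality, which indeed reduces to the classical one applied separately in each homogeneity because $\Gamma_{(p,q)}(I)=\big(\tfrac{n}{p}\big)!\big(\tfrac{m}{q}\big)!$ factors and the splitting sum factors accordingly; so the proposal is sound as an outline of the cited argument.
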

It is well known that a Brownian motion $B_t$ can be enhanced to a $p$-rough path using either It\^{o} or Stratonovich integration (this is detailed in standard textbooks, such as \cite{friz2010multidimensional}).
As the function $t\mapsto t$ has finite variation, this immediately leads to the following theorem:
\begin{theorem}
The ``space-time'' Brownian motion, $X_t := (t,B_t)$, can be enhanced to a
$(1, p)$-rough path in either an It\^{o} or a Stratonovich sense, almost surely, for any $p > 2$.
\end{theorem}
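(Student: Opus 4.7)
The plan is to combine the standard rough-path enhancement of Brownian motion with the trivial enhancement of the time component, and then invoke Theorem~\ref{gyurko} to lift the resulting degree-$1$ $(1,p)$-rough path to arbitrary degree. Thus it suffices to produce a $(1,p)$-rough path of degree $1$ on $V = V^{0}\oplus V^{1}$; the extension to every degree $k\geq 1$ then follows from Gyurk\'o's theorem by uniqueness.

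First I would recall that, for any $p>2$, Brownian motion admits an It\^{o} (respectively Stratonovich) enhancement to a $p$-rough path $\b{B}$, almost surely, satisfying Chen's identity together with the analytic bound
\[ \normb{\b{B}_{s,t}^{\,k}}\;\leq\;\frac{w_{B}(s,t)^{k/p}}{\beta_{p}\,(k/p)!},\qquad k=1,\ldots,\lfloor p\rfloor, \]
for an almost surely finite control $w_{B}$; a standard choice is $w_{B}(s,t)=\norm{B_{[s,t]}}_{p\text{-var}}^{p}$. Identifying the letters $\{0,1\}$ so that the time direction contributes $1$ per occurrence and the Brownian direction contributes $1/p$ per occurrence to the $(1,p)$-degree, a direct enumeration shows that the multi-indices $I\in\{0,1\}^{\ast}$ with $\deg_{(1,p)}(I)\leq 1$ are exactly the empty word, the single time-letter, and the pure-Brownian words of length $k\leq\lfloor p\rfloor$; no mixed words occur at this degree. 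The candidate degree-$1$ rough path is therefore fully specified by prescribing its scalar component to be $1$, its time-letter component to be $t-s$, and each pure-Brownian component of length $k$ to be $\b{B}_{s,t}^{\,k}$.

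Next, I would assemble a common control $w(s,t):=(t-s)+w_{B}(s,t)$ and a single positive constant $\beta$ for which the analytic inequality in the definition of a $(1,p)$-rough path holds simultaneously on every component; this is routine since the sum of two controls is again a control and each underlying bound is monotone in the control. Chen's identity at degree $1$ reduces on the pure-Brownian components to the identity already satisfied by $\b{B}$, and on the time component to the trivial additivity $(t-s)=(u-s)+(t-u)$. Since there are no mixed words in the degree-$1$ truncation, no further cross-terms need to be verified. At this point the hypotheses of Theorem~\ref{gyurko} are met, and invoking it produces, for every $k\geq 1$, a unique $(1,p)$-rough path of degree $k$ extending the degree-$1$ data — precisely the enhancement of $(t,B_{t})$ claimed.

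The It\^{o} versus Stratonovich choice enters only through the symmetric part of $\b{B}^{\,2}$, via the deterministic correction $\tfrac{1}{2}(t-s)I_{d}$; this is a finite-variation perturbation and does not disturb any of the above estimates, so the same construction produces a valid enhancement in either convention. There is no real analytic obstacle: the nontrivial probabilistic input is the classical Brownian rough-path enhancement, and Gyurk\'o's extension theorem handles all higher iterated integrals — in particular the mixed time-Brownian ones — automatically. The only point demanding any care is the bookkeeping required to deliver a single uniform bound with one control and one constant.
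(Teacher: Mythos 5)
Your construction is correct and follows essentially the same route as the paper, which obtains the theorem ``immediately'' from the standard It\^{o}/Stratonovich enhancement of $B$ to a $p$-rough path together with the finite variation of $t\mapsto t$ --- exactly the degree-one data you assemble before invoking Theorem~\ref{gyurko} for the higher iterated integrals. One small quibble: the sample control $\|B_{[s,t]}\|_{p\text{-var}}^{p}$ by itself does not bound the second-level (area) component, so one should take the control associated with the full Brownian rough path (e.g.\ adding the $p/2$-variation of the second level), but since you only need existence of \emph{some} a.s.\ finite control this does not affect the argument.
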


In general, for a Rough Differential Equation of the form 
$$dZ_t=V(Z_t)dX_t,$$ with $Z_0=|z_0|>0$ with $X_t:[0,T] \to \mathbb{R}^d$ a finite $p$-variation path for any $p>2$, we use the following compact notation for the first $r$ terms of the Taylor approximation.

\begin{definition}\label{Tayloraprox}
Let $I=(i_1,..,i_k)$ be a multi-index. Given the continuously differentiable vector fields $(V_1, \cdots, V_d)$ on $\mathbb{R}^{e}$, and a multiplicative functional with finite $p$-variation, $\bold{X} \in T((\mathbb{R}^d))$, we define $$ \mathcal{E}^{r}_{(V)}(Z_0, \bold{X}_{0,t}):=\sum_{I: deg(I) \leq r} V_{i_1}\cdots V_{i_k}\textbf{Id}(Z_0)\bold{X}_{0,t}^{k, i_1,\cdots, i_k}$$
as the increment of the step $r$-truncated Taylor approximation on the interval $[0,t]$.\\
The notation  $V_{i_0} \cdots V_{i_k}\textbf{Id}$ stands for the composition of differential operators associated with the vector fields,
\begin{align*}
V_{i_1}\cdots V_{i_k}\textbf{Id}(Z_0) = \left(\left(V_{i_{1}}\sum_{i=1}^{e}\frac{d}{dx_{i}}\right)\circ \cdots \circ \left(V_{i_{k-1}}\sum_{i=1}^{e}\frac{d}{dx_{i}}\right)\circ V_{i_{k}}\right)\hspace{-1mm}\big(Z_{0}\big),
\end{align*}
and $\bold{X}_{0,t}^{i_1,\cdots, i_k}$ stands for the terms obtained from the iterated integrals $$\int_{0<s_1<..<s_k<t}dX_{s_1}^{i_1}\cdots dX_{s_k}^{i_k}.$$
\end{definition}

\section{Main result}

Let us consider the backward Loewner differential equation driven by Brownian motion. 
\begin{align}
\frac{\partial h_{t}(z)}{\partial t}& = \frac{-2}{h_{t}(z) - \sqrt{\kappa}\, B_{t}}\,,\\
h_{0}(z) & = z\,.\nonumber
\end{align}

By performing the identification $h_t(z)-B_t=Z_t$, we obtain the following dynamics in $\mathbb{H}\,,$ that we consider throughout this section
\begin{align}\label{backwardeq2}
dZ_t=\frac{-2/\kappa}{Z_t}dt+dB_t, \hspace{5mm}Z_0=z_0 \in \mathbb H.
\end{align}

Let $\epsilon>0$. Let us consider the starting point of the backward Loewner differential equation $z_0 \in \mathbb{H}$ with  $|z_0|=\sqrt{x_0^2+y_0^2}=\epsilon$.

Considering the equation for the imaginary part of the dynamics under the backward Loewner differential equation in the upper half-plane $dY_t=\frac{2Y_t}{X_t^2+Y_t^2}dt$, we obtain that the imaginary part is increasing, a.s.. Thus, the backward Loewner differential equation starting from $\epsilon>0$ is a Rough Differential Equation, i.e. we have that the backward Loewner Differential Equation can be written as 
$$dZ_t=V(Z_t)dX_t,$$
where $V(Z)=(V_0(Z), V_1(Z))$ with $V_0(Z)=-\frac{2/\kappa}{Z}\frac{d}{dz}$ and $V_1(Z)=\frac{d}{dx}$ that is driven by the space-time Brownian motion $X_t=(t, B_t)$. 
Thus from Definition \ref{Tayloraprox}, we have a truncated Taylor approximation associated with the backward Loewner Differential Equation. 

Our main result, along with an important remark are given below.

\begin{theorem}\label{theoremTaylor}
The one-step $r$-truncated Taylor approximation for the backward Loewner differential equation started from $Z_0 = \epsilon i$ with $\epsilon>0$  admits an $O(\epsilon)$ error almost surely over the time horizon $0 \leq t \leq \epsilon^{2+\delta}$, for any $\delta>0$ and sufficiently small $\epsilon$. In other words,
\begin{align}\label{mainresult}
\left|\,Z_{t} - \mathcal{E}^{r}_{(V)}(Z_0, \bold{X}_{0,t})\,\right|\leq C\,\epsilon,\hspace{2.5mm}\forall t\in\big[0,\epsilon^{2+\delta}\,\big],
\end{align}
where $\epsilon$ is sufficiently small and $C$ is an a.s. finite constant that depends only on $r$ and $\delta$. 
\end{theorem}
\begin{remark}
It is natural to investigate what happens to the error when the diffusion process and truncated Taylor approximation are taken over a larger time horizon of $\epsilon^{2-\delta}$.
In this case, one can quantify the size of vector field derivatives and iterated integrals as
\begin{align*}
|V_{i_1 }\cdots V_{i_l}\textbf{Id}(Z_0)| & = O\left(\frac{1}{\epsilon^{2m-1+n}}\right),\\
\left\|\int_{0\,<\,s_1\,<\,\cdots\,<\,s_l\,<\,\epsilon^{2-\delta}}dX_{s_1}^{i_1}\cdots dX_{s_l}^{i_l}\right\|_{L^{2}(\mathbb{P})} & = O\left(\epsilon^{(2-\delta)(m+\frac{1}{2}n)}\right),
\end{align*}
where $I=(i_{1},\cdots, i_{k})\in\{0,1\}^{\ast}$ is a a multi-index, $X_t=(t, B_t)$ is a space-time Brownian motion and $(m, n)$ are the non-negative integers given in Definition \ref{degreedef} and Theorem \ref{gyurko}.
We shall see in Section 6 that the above iterated integral can be estimated in an $L^{2}(\mathbb{P})$ sense using the scaling properties of space-time Brownian motion. This indicates that on a time horizon of $\epsilon^{2-\delta}$, the ``higher degree'' terms in a Taylor approximation increase since
\begin{align*}
\left\|\,V_{i_1 }\cdots V_{i_l}\textbf{Id}(Z_0)\int_{0\,<\,s_1\,<\,\cdots\,<\,s_l\,<\,\epsilon^{2-\delta}}dX_{s_1}^{i_1}\cdots dX_{s_l}^{i_l}\,\right\|_{L^{2}(\mathbb{P})} & = O\left(\epsilon^{1-\delta\deg_{(1,2)}(I)}\right).
\end{align*}
As a result, such a Taylor expansion cannot converge absolutely (in an $L^{2}(\mathbb{P})$ sense).
Moreover, to extend (\ref{mainresult}) to a longer time horizon of $\epsilon^{2-\delta}$, we would have to estimate the error of a Taylor expansion where the leading terms in the remainder are larger than $O(\epsilon)$.
That is, $\epsilon^{2}$ is an ``asymptotic'' radius of convergence for the backward Loewner equation.\medbreak

To summarize, we have identified the time span whereby the difference between Loewner dynamics and its Taylor approximation is small, and have shown that this is no longer the case if we extend that time interval. Alternatively, this can also be viewed as the longest time horizon on which Taylor approximations of Loewner dynamics remain well behaved.
\end{remark}


\section{Asymptotic growth of the vector fields}

We consider backward Loewner Differential Equation
\begin{align}\label{backwardeq}
dZ_t=\frac{-2/\kappa}{Z_t}dt+dB_t, \hspace{5mm}Z_0=z_0 \in \mathbb H,
\end{align}
with $|z_0|=\epsilon>0$ and with the vector fields $V_0(z)=\frac{-2/\kappa}{z}\frac{d}{dz}$ and $V_1(z)=\frac{d}{dx}$.

In order to prove the main result, we first prove the following lemmas in this section.

\begin{lemma}\label{Lemmacombinations}
At fixed level $r$ there are $2^r$ terms obtained from all the possible ways of composing the vector fields $V_0=\frac{-2/\kappa}{Z}\frac{d}{dz}$ and $V_1=\frac{d}{dx}$.
\end{lemma}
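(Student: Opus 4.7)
The statement is essentially a counting claim, so the plan is simply to identify precisely which objects are being counted and then invoke the obvious bijection with binary strings of length $r$.

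First, I would interpret ``level $r$'' as the length $r$ of the multi-index $I=(i_1,\dots,i_r)$ that indexes a term $V_{i_1}\cdots V_{i_r}\mathbf{Id}(Z_0)$ in the truncated Taylor expansion of Definition \ref{Tayloraprox}. Since equation \eqref{backwardeq2} has exactly two vector fields, namely $V_0(z)=-\tfrac{2/\kappa}{z}\tfrac{d}{dz}$ (associated with the $dt$ component of the driver $X_t=(t,B_t)$) and $V_1(z)=\tfrac{d}{dx}$ (associated with $dB_t$), each entry $i_j$ of the multi-index is an element of the two-element alphabet $\{0,1\}$.

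Next, I would simply observe that the set of multi-indices of length $r$ with entries in $\{0,1\}$ is $\{0,1\}^{r}$, which has cardinality $2^r$. Each such multi-index $I$ yields one, and only one, term of the form $V_{i_1}\cdots V_{i_r}\mathbf{Id}$ in the Taylor expansion, since the composition of differential operators is ordered: swapping the order of indices generally gives a different operator (for instance, $V_0 V_1$ and $V_1 V_0$ act differently on the identity map because $V_0$ is nonconstant in $z$ while $V_1$ has constant coefficients). Therefore the map
\begin{equation*}
\{0,1\}^{r}\;\longrightarrow\;\big\{V_{i_1}\cdots V_{i_r}\mathbf{Id}\,:\,(i_1,\dots,i_r)\in\{0,1\}^{r}\big\}
\end{equation*}
is a bijection onto the collection of formal compositions at level $r$, giving exactly $2^r$ terms.

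There is no real obstacle here: the only thing to be careful about is distinguishing ``level'' (length of the word) from ``$(p,q)$-degree'' in the sense of Definition \ref{degreedef}; the latter grouping would collect several different words into a single degree class, whereas the former counts words one by one. Since the statement concerns all possible compositions rather than all compositions of a given degree, the count is simply $|\{0,1\}|^{r}=2^{r}$, and the lemma follows.
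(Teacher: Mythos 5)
Your argument is correct and amounts to the same elementary count as the paper's: the paper proves $|\{0,1\}^r|=2^r$ by induction on $r$ (prepending $V_0$ or $V_1$ to each level-$k$ composition), while you state the bijection with binary words of length $r$ directly. Your added remark that ``level'' means word length rather than $(p,q)$-degree, and that order of composition matters so distinct words give distinct formal terms, is consistent with how the lemma is used.
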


\begin{proof}
We prove this using induction. 
For $r=1$, there are $2=2^1$ possible terms obtained from either of the vector fields. 
For $r=2$, the possible compositions are $V_1 \circ V_0$, $V_1\circ V_1$ , $V_1 \circ V_0$ and $V_0\circ V_1$ given $2^2$ possibilities. 
Let us assume that at level $k>0$ there are $2^k$ possible combinations. 
To obtain all the possible compositions at level $k+1$, we have to consider $V_0 \circ V^{k}$ and $V_1 \circ V^{k}$ where $V^{k}$ are all the possible compositions at level $k$. Thus, at level $k+1$ we obtain in total $2^k+2^k=2^{k+1}$ possibilities, and the argument follows by induction.
\end{proof}

\begin{lemma}\label{lemmavectorfields}
Let $r=m+n$, with $n$ being the number of $dB_t$ entries and $m$ being the number of $dt$ entries in the $r$-level iterated integral $\int_{0<s_1<\cdots<s_r<t} dX_{s_1}\cdots dX_{s_r}$. Then, for $Z_0=\epsilon i$ with $\epsilon > 0$, we have that 
$$|V_{i_1 }\cdots V_{i_r}\textbf{Id}(Z_0)|=O\left(\frac{1}{\epsilon^{2m-1+n}}\right).$$

\end{lemma}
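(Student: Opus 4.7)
The plan is to prove a slightly stronger structural statement by induction on $r = m + n$: for every multi-index $I = (i_1, \ldots, i_r) \in \{0,1\}^r$ with $m$ zeros and $n$ ones, the function $V_{i_1}\cdots V_{i_r}\textbf{Id}(z)$ is a finite $\mathbb{C}$-linear combination of integer powers $z^{-k}$ with $k \leq 2m - 1 + n$. Once this is in hand, evaluating at $Z_0 = \epsilon i$ and using $|z^{-k}| = \epsilon^{-k}$ together with the triangle inequality delivers the desired bound $O(\epsilon^{-(2m-1+n)})$. The reason I track a structural fact (the Laurent-polynomial shape), rather than just the pointwise magnitude at $\epsilon i$, is that the inductive step requires differentiating the previous output, and bare size bounds are not stable under differentiation.

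Before the induction, I would record two elementary calculational facts, both using that every function produced by the recursion is holomorphic on $\mathbb{H}$. First, on holomorphic functions the Cauchy--Riemann equations give $V_1 = d/dx = d/dz$, so $V_1$ sends $z^{-k}$ to $-k\,z^{-(k+1)}$, raising the maximal pole order by exactly $1$. Second, $V_0 = -\tfrac{2/\kappa}{z}\,\tfrac{d}{dz}$ sends $z^{-k}$ to $\tfrac{2k/\kappa}{1}\,z^{-(k+2)}$, raising the pole order by exactly $2$. Both operations preserve the Laurent-polynomial structure, and linearity extends them to any finite sum $\sum_k a_k z^{-k}$.

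The base case $r = 0$ is immediate, since $\textbf{Id}(z) = z$ and the convention $2(0) - 1 + 0 = -1$ gives the bound $|z| \leq \epsilon$. For the inductive step, assume the claim for all length-$r$ multi-indices and let $V_j V_{i_1}\cdots V_{i_r}$ be a composition of length $r+1$. By the inductive hypothesis, $V_{i_1}\cdots V_{i_r}\textbf{Id}(z) = \sum_k a_k z^{-k}$ with $k \leq 2m - 1 + n$. If $j = 1$, the first observation produces a new sum whose largest pole order is at most $2m - 1 + n + 1 = 2m - 1 + (n+1)$, matching the new counts $(m, n+1)$. If $j = 0$, the second observation produces a new sum whose largest pole order is at most $2m - 1 + n + 2 = 2(m+1) - 1 + n$, matching the new counts $(m+1, n)$. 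This closes the induction.

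There is no real analytic obstacle here; the argument is a bookkeeping exercise in pole orders. The one point that would need a sentence of care is checking that the finite sums remain finite at each step (no infinite Laurent tails appear), which is immediate from the fact that both operations act polynomially on each monomial and that the recursion has finite depth. I would not attempt to establish matching lower bounds, since the statement is an $O(\cdot)$ upper bound; indeed simple compositions such as $V_0 V_1 \textbf{Id} \equiv 0$ show that the bound is not always tight, but this is harmless.
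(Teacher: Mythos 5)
Your proposal is correct and follows essentially the same route as the paper: the paper's informal ``rules'' (composition with $V_0$ raises the pole order by $2$, with $V_1$ by $1$, using Cauchy--Riemann to identify $\frac{d}{dx}$ with $\frac{d}{dz}$) are exactly your pole-order bookkeeping, and both yield the exponent $2m-1+n$ at $Z_0=\epsilon i$. Your version simply formalizes this as an induction on the Laurent-polynomial structure, which is a tidier but not substantively different argument.
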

\begin{proof}

Given the format of the backward Loewner differential equation we have that the vector fields that can appear are either $V_0=\frac{-2/\kappa}{Z}\frac{d}{d z}$ or $V_1=\frac{d}{d x}$. Note that from the Cauchy-Riemann equations we deduce that for complex differentiable functions the linear differential operators $\frac{d}{dx}$ and $\frac{d}{dz}$ are equivalent when acting on complex differentiable functions, where $\frac{d}{dz}$ denotes the complex differentiation.
For fixed values of $m$ and $n=r-m$ we have, by definition $m$ time entries in the iterated integrals and $m$ times the vector field  $V_0=\frac{-2/\kappa}{Z}\frac{d}{dz}$ and $n$ of the entries $dB_t$ together with $n$ times the vector field $V_1=\frac{d}{dx}$.
We also note that in order for $ V_{i_1 }\cdots V_{i_r}\textbf{Id}(z)$ to be non zero then $V_{i_r}=\frac{1}{Z}$, otherwise $V_{i_r}=1$ and applying any other choice of $V_{i_{r-1}}$ will give the derivative of a constant that is zero. 

Then, we provide the following rules when considering the composition of vector fields $ V_{i_1 }\cdots V_{i_r}\textbf{Id}(z)$ (up to some absolute constants that change, but we avoid keeping their dependence in our analysis). These rules are specific to the structure of the vector fields of the Loewner differential equation and they give a way to transform the composition of the differential operators associated with vector fields in the left into multiplication with the function on the right up to some constants (that we do not keep track of since they do not influence the analysis)
\begin{itemize}
\item $\frac{-2/\kappa}{Z}\frac{d}{d z}\longleftrightarrow \frac{2/\kappa}{Z^2}$

\item $\frac{d}{dx} \longleftrightarrow  \frac{-1}{Z}$
\end{itemize}
in the following sense: $V_0\circ V^k= \frac{2/\kappa}{Z^2} f(Z)$  and  $V_1\circ V^k= \frac{-1}{Z} f(Z) $, where we have that $f(z)=V_{i_1} \circ V_{i_2} \circ \cdots \circ V_{i_k}\circ\textbf{Id}(Z) $ for any $k>0$.

To illustrate this, we consider $$V_0\circ V_0 \circ \bold{Id}(Z)= \frac{1}{Z}\frac{d}{d z}\frac{-1}{Z}.$$ Then, $\frac{d}{d z}\frac{1}{Z}=\frac{1}{Z^2}$ and $V_0\circ V_0 \circ \bold{Id}(Z)=\frac{-1}{Z^3}$ up to some constants. In general, the analysis is similar. Indeed, the result of the composition is obtained by iteration using either of the vector fields applied to the initial $\frac{-2/\kappa}{Z}$ value (since otherwise we would obtain zero since we apply differential operators to a constant), and since the differential operators either act by differentiation or by differentiation and multiplication with $\frac{1}{Z}$ up to some constants, the rules hold.
Using these rules, independent of the order of applications of the vector fields we have that for $Z_0=\epsilon i$,
$$|V_{i_1 }\cdots V_{i_r}\textbf{Id}(Z_0)|=O\left(\frac{1}{\epsilon^n}\frac{1}{\epsilon^{2m-1}}\right)=O\left(\frac{1}{\epsilon^{2m-1+n}}\right).$$
\end{proof}

\section{Proof of Theorem \ref{theoremTaylor}}

Let $r \geq 1$ and $\delta > 0$ be fixed. Then we are interested in estimating the absolute value of the truncated Taylor approximation remainder for $t\in\big[0, \epsilon^{2+\delta}\big)$,
\begin{align}\label{remainder}
R_{r}(t, Z_t, \bold{X}) = Z_{t} - \mathcal{E}_{(V)}^{r}(Z_0, \bold{X}_{0,t}),
\end{align}
where $Z_0 = \epsilon i$ with $\epsilon > 0$.\medbreak
\noindent
Our first step is obtain bounds on the truncated Taylor approximation itself. Recall that
\begin{align*}
\mathcal{E}_{(V)}^{r}(Z_0, \bold{X}_{0,t}) = \sum_{I: deg(I) \leq r} V_{i_1}\cdots V_{i_k}\textbf{Id}(Z_0)\bold{X}_{0,t}^{i_1,\cdots, i_k}.
\end{align*}
Therefore, by the triangle inequality and definition of supremum norm, it follows that
\begin{align*}
\left|\mathcal{E}_{(V)}^{r}(Z_0, \bold{X}_{0,t})\right| \leq \sum_{I: deg(I) \leq r} \|V_{i_1}\cdots V_{i_k}\textbf{Id}\|_{\infty}\,\big|\bold{X}_{0,t}^{i_1,\cdots, i_k}\big|.
\end{align*}

Since the space-time Brownian motion $X_t=(t, B_t)$ is a $(1,p)$-rough path with $p>2$, we can apply the estimate for iterated integrals given by the extension Theorem \ref{gyurko}.
Thus for any multi-index $I\in\{0,1\}^{\ast}$, there exists an a.s. finite constant $C_I > 0$ such that
\begin{align*}
\big|\bold{X}_{0,t}^{I}\big| \leq C_{I}\,\omega(0,t)^{m+\frac{n}{p}},
\end{align*}
where
\begin{align*}
m & = \#\{\,j : i_j = 0, i_{j}\in I\},\\[3pt]
n & = \#\{\,j : i_j = 1, i_{j}\in I\}.
\end{align*}
On the other hand, it was shown previously that the vector field derivatives grow as
\begin{align*}
\|V_{i_1}\cdots V_{i_k}\text{Id}\|_{\infty} = O\left(\frac{1}{\epsilon^{2m+n-1}}\right).
\end{align*}
Hence, for each multi-index $I$, there exist an almost surely finite constant $\widetilde{C}_{I}$ such that 
\begin{align*}
\|V_{i_1}\cdots V_{i_k}\textbf{Id}\|_{\infty}\,\big|\bold{X}_{0,t}^{i_1,\cdots, i_k}\big| & \leq \widetilde{C}_{I}\,\frac{1}{\epsilon^{2m+n-1}}\,t^{m+\frac{n}{p}}\\[3pt]
& \leq \widetilde{C}_{I}\,\frac{1}{\epsilon^{2m+n-1}}\,\epsilon^{(2+\delta)\left(m+\frac{n}{p}\right)}\\[3pt]
& \leq \widetilde{C}_{I}\,\epsilon^{1 + \delta m + \big((2+\delta)\frac{1}{p} - 1\big)n},
\end{align*}
This immediately gives
\begin{align*}
\|V_{i_1}\cdots V_{i_k}\textbf{Id}\|_{\infty}\,\big|\bold{X}_{0,t}^{i_1,\cdots, i_k}\big| \leq \widetilde{C}_{I}\,\epsilon,
\end{align*}
provided that $2 < p \leq \max\big(\frac{n}{n-m\delta}\,, 1\big)\big(2+\delta\big)$. Hence, by setting $p = 2 + \delta$, we have that
\begin{align*}
\left|\mathcal{E}_{(V)}^{r}(Z_0, \bold{X}_{0,t})\right| \leq \widetilde{C}\,\epsilon,\hspace{2.5mm}\forall t\in\big[0,\epsilon^{2+\delta}\,\big),
\end{align*}
where the constant $\widetilde{C} > 0$ is almost surely finite. Similarly, we have that for $t\in\big[0,\epsilon^{2+\delta}\,\big)$,
\begin{align*}
\big|Z_{t}\big| & = \left|\,\int_{0}^{t}-\frac{2/\kappa}{Z_{s}}\,ds + B_{t}\,\right|\\[3pt]
& \leq \frac{2}{\kappa}\int_{0}^{t}\frac{1}{|Z_{s}|}\,ds + |B_{t}|\\[3pt]
& \leq \frac{2}{\kappa}\cdot\frac{1}{\epsilon}\cdot t + C_{(1)}\,t^{\frac{1}{2+\delta}}\\[3pt]
& \leq \frac{2}{\kappa}\,\epsilon^{1+\delta} + C_{(1)}\,\epsilon\,,
\end{align*}
where we have used the facts that the imaginary part of $|Z_{t}|$ is increasing and $|Z_{0}|= \epsilon$.
By combining these estimates, we have the desired result that for a fixed $r\geq 1$ and $\delta > 0$,
\begin{align*}
\big|R_{r}(t, Z_t, \bold{X})\big| \leq C\,\epsilon,\hspace{2.5mm}\forall t\in\big[0,\epsilon^{2+\delta}\,\big),
\end{align*}
where $\epsilon$ is sufficiently small and $C$ is an a.s. finite constant that depends only on $r$ and $\delta$.

\begin{remark}
Alternatively, we could have used a rough Taylor expansion (Theorem 1.1 in \cite{yang2015taylor}) to estimate the approximation error $R$. This would give an estimate with the form:
\begin{align*}
\left|Z_{t} - \mathcal{E}_{(V)}^{r}(Z_0, \bold{X}_{0,t})\right| \leq C\sum_{\substack{I = (i_{1},\cdots, i_{k})\in\{0,1\}^{\ast} \\ \deg_{(1,p)}(I) \leq r\\ I\,\text{maximal}}}\big\|V_{i_{1}}\cdots V_{i_{k}}\textbf{Id}\big\|_{\text{Lip}(1)}\left\|\bold{X}_{[0,t]}\right\|_{p-var}^{k}.
\end{align*}
Unfortunately, this will not produce the desired estimate of $O(\epsilon)$ due to the Lipschitz norm $\|V_{i_{1}}\cdots V_{i_{k}}\textbf{Id}\big\|_{\text{Lip}(1)}$ being asymptotically larger than the uniform norm $\|V_{i_{1}}\cdots V_{i_{k}}\textbf{Id}\big\|_{\infty}$.
As a result, over the interval $[0, \epsilon^{2+\delta}]$, we can obtain an improved error estimate simply by showing that both $Z_{t}$ are $\mathcal{E}_{(V)}^{r}(Z_0, \bold{X}_{0,t})$ have size $O(\epsilon)$ and are therefore well behaved.
\end{remark}

\subsection{$L^{2}(\mathbb{P})$ error analysis}

We will now sate and prove an $L^{2}(\mathbb{P})$ version of the main result.
\begin{theorem}\label{theoremTaylorl2}
The one-step $r$-truncated Taylor approximation for the backward Loewner differential equation started from $Z_0 = \epsilon i$, with $\epsilon>0$ sufficiently small, admits an $O(\epsilon)$ error in an $L^{2}(\mathbb{P})$ sense over the time horizon $0 \leq t \leq \epsilon^{2}$. In other words, we have that
\begin{align*}
\left\|\,Z_{t} - \mathcal{E}^{r}_{(V)}(Z_0, \bold{X}_{0,t})\,\right\|_{L^{2}(\mathbb{P})}\leq C\epsilon,\hspace{2.5mm}\forall t\in\big[0,\epsilon^{2}\,\big],
\end{align*}
where $C$ is a finite constant that depends only on $r$. 
\end{theorem}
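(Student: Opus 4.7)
The plan is to mirror the pathwise proof of Theorem \ref{theoremTaylor} but to swap the $p$-variation rough path estimate for iterated integrals (which forced the cushion $\delta > 0$) with a sharper $L^{2}(\mathbb{P})$ estimate coming from the scaling self-similarity of space-time Brownian motion. This lets the Brownian exponent appear as $n/2$ rather than $n/p$ with $p > 2$, and so permits the time horizon to be exactly $\epsilon^{2}$ rather than $\epsilon^{2+\delta}$.

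First I would establish the iterated integral identity
\[
\left\|\mathbf{X}_{0,t}^{I}\right\|_{L^{2}(\mathbb{P})} \;=\; C_{I}\, t^{\,m + \frac{n}{2}},
\]
for any multi-index $I = (i_{1}, \ldots, i_{k})\in\{0,1\}^{\ast}$ with $m$ time entries and $n$ Brownian entries, and some deterministic constant $C_{I} < \infty$. This is the $L^{2}$ bound already cited in the remark following Theorem \ref{theoremTaylor}, and is obtained by the rescaling $s \mapsto t s$ together with $B_{ts} \overset{d}{=} \sqrt{t}\,B_{s}$: each time integration contributes a factor $t$ and each Brownian integration contributes a factor $\sqrt{t}$, while $\|\mathbf{X}_{0,1}^{I}\|_{L^{2}}$ is finite since it lies in a fixed-order Wiener chaos. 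Equivalently, one can iterate Itô's isometry and the Cauchy-Schwarz inequality.

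Combined with the vector field estimate of Lemma \ref{lemmavectorfields} at $Z_{0} = \epsilon i$, at $t = \epsilon^{2}$ each individual term of the truncated Taylor expansion satisfies
\[
\left\|V_{i_{1}}\cdots V_{i_{k}}\mathbf{Id}(Z_{0})\,\mathbf{X}_{0,t}^{I}\right\|_{L^{2}(\mathbb{P})} \;\leq\; \widetilde{C}_{I}\,\epsilon^{-(2m+n-1)}\,\epsilon^{\,2m+n} \;=\; \widetilde{C}_{I}\,\epsilon,
\]
and since Lemma \ref{Lemmacombinations} guarantees that only finitely many multi-indices appear in $\mathcal{E}^{r}_{(V)}$, a triangle-inequality sum delivers $\|\mathcal{E}^{r}_{(V)}(Z_{0}, \mathbf{X}_{0,t})\|_{L^{2}(\mathbb{P})} \leq C_{1}\epsilon$ uniformly on $[0, \epsilon^{2}]$. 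To close the argument I would bound $\|Z_{t}\|_{L^{2}(\mathbb{P})}$ on the same horizon exactly as in the pathwise proof: since $\operatorname{Im}(Z_{s})$ is non-decreasing, $|Z_{s}| \geq \epsilon$ for all $s$, hence
\[
\|Z_{t}\|_{L^{2}(\mathbb{P})} \;\leq\; \frac{2}{\kappa}\int_{0}^{t}\frac{1}{\epsilon}\,ds + \|B_{t}\|_{L^{2}(\mathbb{P})} \;\leq\; \frac{2}{\kappa}\frac{t}{\epsilon} + \sqrt{t} \;\leq\; C_{2}\epsilon
\]
for $t \leq \epsilon^{2}$; the triangle inequality then gives the stated bound on $\|R_{r}(t, Z_{t}, \mathbf{X})\|_{L^{2}(\mathbb{P})}$. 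The main technical point requiring care is the exact $L^{2}$ scaling law for $\mathbf{X}_{0,t}^{I}$ with a deterministic, $t$-independent constant $C_{I}$; I expect this to reduce cleanly to iterated Itô isometry (or Brownian self-similarity) applied to the mixed Lebesgue/stochastic ordering, but it is the one place where the argument differs substantively from the pathwise case and is what enables removing the $\delta$.
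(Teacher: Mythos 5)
Your proposal is correct and follows essentially the same route as the paper's proof: the $L^{2}(\mathbb{P})$ scaling identity $\|\mathbf{X}_{0,t}^{I}\|_{L^{2}(\mathbb{P})} = \|\mathbf{X}_{0,1}^{I}\|_{L^{2}(\mathbb{P})}\, t^{\deg_{(1,2)}(I)}$ via Brownian self-similarity (with finiteness from It\^{o} isometry), combined with the vector field bounds of Lemma \ref{lemmavectorfields} over finitely many multi-indices, and the bound $\|Z_{t}\|_{L^{2}(\mathbb{P})} \leq \frac{2}{\kappa}\frac{t}{\epsilon} + \sqrt{t} \leq C\epsilon$ using the monotone imaginary part. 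No gaps to report.
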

\begin{proof}
Using the same strategy of proof as for Theorem \ref{theoremTaylor}, it is enough to argue that:
\begin{align}
\big\|\bold{X}_{0,t}^{I}\big\|_{L^{2}(\mathbb{P})} & = \big\|\bold{X}_{0,1}^{I}\big\|_{L^{2}(\mathbb{P})}\,t^{\deg_{(1,2)}(I)},\label{integralscale}\\[3pt]
\big\|Z_{t}\|_{L^{2}(\mathbb{P})} & \leq C\epsilon,\label{solscale}
\end{align}
for small $0\leq t\leq\epsilon^{2}$. To show (\ref{integralscale}), we shall use the scaling property of Brownian motion:
\begin{align*}
\{B_{s}\}_{s\geq 0}\sim \left\{\frac{1}{\sqrt{c}}B_{cs}\right\}_{s\geq 0},
\end{align*}
where $c$ is constant and ``\,$\sim$\,'' means that both stochastic processes have the same law.
Therefore (\ref{integralscale}) follows by changing variables $\big(s_{i}\mapsto\tilde{s}_{i}:=\frac{1}{t}s_{i}\big)$ in the integral $\bold{X}_{0,t}^{I}$ so that
\begin{align*}
\left\|\,\int_{0<s_1<\cdots<s_k<t}dX_{s_1}^{i_1}\cdots dX_{s_k}^{i_k}\,\right\|_{L^{2}(\mathbb{P})} =\,\,\, \left\|\,t^{\deg_{(1,2)}(I)}\int_{0<\tilde{s}_1<\cdots<\tilde{s}_k<1}dX_{\tilde{s}_1}^{i_1}\cdots dX_{\tilde{s}_k}^{i_k}\,\right\|_{L^{2}(\mathbb{P})}.
\end{align*}
Note that $\big\|\bold{X}_{0,1}^{I}\big\|_{L^{2}(\mathbb{P})} < \infty$ follows by the It\^{o} isometry and triangle inequality for integrals. 
Similarly, we can estimate the $L^{2}(\mathbb{P})$ norm of the solution $Z_{t}$ by
\begin{align*}
\big\|Z_{t}\big\|_{L^{2}(\mathbb{P})} & = \left\|\,\int_{0}^{t}-\frac{2/\kappa}{Z_{s}}\,ds + B_{t}\,\right\|_{L^{2}(\mathbb{P})}\\[3pt]
& \leq \left\|\frac{2}{\kappa}\int_{0}^{t}\frac{1}{Z_{s}}\,ds\right\|_{L^{2}(\mathbb{P})} + \left\|B_{t}\right\|_{L^{2}(\mathbb{P})}\\[3pt]
& \leq \frac{2}{\kappa}\int_{0}^{t}\left\|\frac{1}{Z_{s}}\right\|_{L^{2}(\mathbb{P})}\,ds + \sqrt{t}\\[3pt]
& \leq \frac{2}{\kappa}\cdot\frac{1}{\epsilon}\cdot t + \sqrt{t}\\[3pt]
& \leq \left(\frac{2}{\kappa} + 1\right)\epsilon\,.
\end{align*}
\end{proof}

\section{SLE simulation using the Ninomiya-Victoir splitting}

In order to simulate an SLE trace, we must first discretize the backward Loewner equation,
\begin{align}\label{backwardloewner}
dZ_{t} & = -\frac{2}{Z_{t}}\,dt + \sqrt{\kappa}\,dB_{t}\,,\\
Z_{0} & = \epsilon i\,,\nonumber
\end{align}

Since the above SDE gives an explicit solution in the zero noise case (i.e. when $\kappa = 0$),
it is natural to apply a splitting method to approximate its solution. Moreover, as (\ref{backwardloewner})
can be viewed in Stratonovich form, such a method can be interpreted as the solution
of an ODE\,/\,RDE governed by the same vector fields but driven by a piecewise linear path.\\
Unfortunately, the convergence results of \cite{shekhar2019remarks} are not applicable if this has vertical pieces.
A well-known splitting method for (Stratonovich) SDEs is the Ninomiya-Victoir scheme, originally proposed in \cite{ninomiyavictoir2008origin}, which in our setting directly corresponds to the Strang splitting.

\begin{definition}[Ninomiya-Victoir scheme for SDEs driven by a single Brownian motion]
Consider an $n$-dimensional Stratonovich SDE on the interval $[0, T]$ with the following form
\begin{align}\label{generalsde}
dY_{t} & = V_{0}(Y_{t})\,dt  + V_{1}(Y_{t})\circ dB_{t}\,,\\
Y_{0} & = \xi\,,\nonumber
\end{align}
where $\xi\in\mathbb{R}^{n}$ and the vector fields $V_{i} : \mathbb{R}^{n} \rightarrow \mathbb{R}^{n}$ are assumed to be Lipschitz continuous.
For $t\in\mathbb{R}$ and $x\in\mathbb{R}^{n}$, let  $\,\exp(tV_{i})\hspace{0.5mm}x$ denote the unique solution at time $u=1$ of the ODE
\begin{align*}
\frac{dy}{du} & = tV_{i}(y)\,,\\
y(0) & = x\,.
\end{align*}
For a fixed number of steps $N$ we can construct a numerical solution $\big\{\widetilde{Y}_{t_{k}}\big\}_{0\leq k \leq N}$ of (\ref{generalsde}) by setting $\widetilde{Y}_{0} := \xi$ and for each $k \in [0 \mathrel{{.}\,{.}}\nobreak N-1]$, defining $\widetilde{Y}_{t_{k+1}}$ using a sequence of ODEs:
\begin{align}\label{nvstep}
\widetilde{Y}_{t_{k+1}} := \exp\left(\frac{1}{2}hV_{0}\right)\exp\Big(B_{t_{k}, t_{k+1}}V_{1}\Big)\exp\left(\frac{1}{2}hV_{0}\right)\widetilde{Y}_{t_{k}}\,,
\end{align}
where $h := \frac{T}{N}$ and $t_{k} := kh$.
\end{definition}

It was shown by Bally and Rey in \cite{ballyrey2016weakconv} that if the SDE (\ref{generalsde}) has smooth bounded vector
fields satisfying an ellipticity condition, then the Ninomiya-Victoir scheme converges in
total variation distance with order 2. That is, for $t\in(0,T]$ there exists $C_{t} < \infty$ such that
\begin{align*}
\forall N\hspace{-0.25mm} \geq 1,\, \forall f:\mathbb{R}^{n}\rightarrow\mathbb{R} \,\,\text{measurable and bounded},\, \sup_{t_{k}\geq t}\left|\mathbb{E}\Big[f\big(\widetilde{Y}_{t_{k}}\big)\Big]\hspace{-0.5mm} - \mathbb{E}\Big[f\big(Y_{t_{k}}\big)\Big]\right| \leq \frac{C_{t}\|f\|_{\infty}}{N^{2}}\hspace{0.25mm}.
\end{align*}

Furthermore, the strong convergence properties of this scheme were surveyed in \cite{gjc2016strongconv}.
Since the SDE (\ref{generalsde}) satisfies a commutativity condition, it was shown under fairly weak assumptions that the 
Ninomiya-Victoir scheme converges in an $L^{p}(\mathbb{P})$ sense with order 1:
\begin{align*}
\text{For}\,\, p\geq 2,\, \text{there exists}\,\, C > 0\,\,\,\text{such that for all}\,\, N \geq 1,\, \mathbb{E}\left[\,\sup_{t\in[0,T]}\big\|Y_{t} - \widetilde{Y}_{t}\big\|^{p}\right] \leq \frac{C}{N^{p}}\,,
\end{align*}
where the approximation $\widetilde{Y}_{t}$ is obtained by interpolating between the discretization points,
\begin{align*}
\widetilde{Y}_{t} := \xi + \frac{1}{2}\int_{0}^{t}V_{0}\big(\widetilde{Y}_{s}^{(0)}\big)\,ds + \int_{0}^{t}V_{1}\big(\widetilde{Y}_{s}^{(1)}\big)\circ dB_{s} + \frac{1}{2}\int_{0}^{t}V_{0}\big(\widetilde{Y}_{s}^{(2)}\big)\,ds\,,
\end{align*}
with the three (piecewise) processes $\widetilde{Y}^{(i)}$ defined over each interval $[\hspace{0.25mm}t_{k}, t_{k+1}\hspace{0.25mm}]$ according to
\begin{align*}
\widetilde{Y}_{t}^{(0)} & := \exp\left(\frac{1}{2}(t-t_{k})V_{0}\right)\widetilde{Y}_{t_{k}}\,,\\[3pt]
\widetilde{Y}_{t}^{(1)} & := \exp\Big(B_{t, t_{k}}V_{1}\Big)\,\widetilde{Y}_{t_{k+1}}^{(0)}\,,\\[3pt]
\widetilde{Y}_{t}^{(2)} & := \exp\left(\frac{1}{2}(t-t_{k})V_{0}\right)\widetilde{Y}_{t_{k+1}}^{(1)}\,.
\end{align*}

Turning our attention back to the Loewner differential equation (\ref{backwardloewner}), we see that the
imaginary part of the solution $\text{Im}(Z_{t})$ is increasing for all $t\geq 0$. So provided  $\text{Im}(Z_{0}) > 0$,
the $dt$ vector field becomes smooth and bounded on the domain $\left\{z\in\mathbb{C} : \text{Im}(z) \geq \text{Im}(\epsilon)\right\}$.
Moreover, this argument also shows that the derivatives of the $dt$ vector field are bounded.
Since the $dB_{t}$ vector field is constant, it will satisfy the various regularity assumptions
(including the ellipticity condition in \cite{ballyrey2016weakconv}). Hence, when applied to the backward Loewner equation,
the Ninomiya-Victoir scheme converges with the above strong and weak rates.
In particular, this implies the numerical method achieves a high order of weak convergence.
Another key feature is that the ODEs required to compute (\ref{nvstep}) can be resolved explicitly.
\begin{theorem}
When $V_{0}(z) = -\frac{2}{z}$ and $V_{1}(z) = \sqrt{\kappa}$ for $z\in\mathbb{C}$, we can explicitly show that
\begin{align*}
\exp(tV_{0})\hspace{0.5mm}z & = \sqrt{z^{2} - 4t}\,,\\
\exp(tV_{1})\hspace{0.5mm}z & = z + \sqrt{\kappa}\, t\,.
\end{align*}
\end{theorem}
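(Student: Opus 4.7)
The plan is to verify the two stated flow formulas by directly integrating the corresponding ODEs, since the definition of $\exp(tV_i)\hspace{0.5mm}z$ tells us exactly which initial value problem to solve: namely $\frac{dy}{du} = tV_i(y)$ with $y(0)=z$, and then evaluate at $u=1$.

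For the first identity, I would write out the ODE as $\frac{dy}{du} = -\frac{2t}{y}$, which is separable. Multiplying by $y$ gives $y\,\frac{dy}{du} = -2t$, i.e. $\frac{d}{du}\!\left(\tfrac{1}{2}y^{2}\right) = -2t$, and integrating from $0$ to $u$ with $y(0)=z$ produces $y(u)^{2} = z^{2} - 4tu$. Setting $u=1$ gives $y(1) = \sqrt{z^{2}-4t}$, with the branch of the square root chosen so that $y(0)=z$ by continuity (which is well-defined because $\mathrm{Im}(z)>0$ keeps $z^{2}-4tu$ away from the non-negative real axis along the flow for small enough $t$, consistent with the earlier observation that the imaginary part of $Z_{t}$ is increasing under the backward Loewner dynamics). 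The only subtlety worth a line of comment is this choice of branch.

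For the second identity, the ODE is $\frac{dy}{du} = \sqrt{\kappa}\,t$, which is constant in $y$, so immediate integration gives $y(u) = z + \sqrt{\kappa}\,tu$, and at $u=1$ we obtain $\exp(tV_{1})\,z = z + \sqrt{\kappa}\,t$.

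There is no real obstacle here; the only point that requires a sentence of justification is the branch specification for the square root in the first formula, which is handled by continuity of the flow starting from $z\in\mathbb{H}$. Everything else is a one-line separable-ODE computation followed by an even simpler linear one.
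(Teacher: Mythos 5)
Your proof is correct and is exactly the computation the paper intends: the theorem is stated without proof, the flows being regarded as elementary to resolve, and your route (solving the separable ODE $\frac{dy}{du}=-\frac{2t}{y}$ for $V_0$ and the trivial ODE for $V_1$, then evaluating at $u=1$ as the definition of $\exp(tV_i)$ prescribes) is the standard verification. Your remark about the branch of the square root is the right point to flag, and in fact for $z\in\mathbb{H}$ and $t,u\geq 0$ the quantity $z^{2}-4tu$ never meets the non-negative real axis (its imaginary part equals that of $z^{2}$, and when that vanishes its real part is negative), so the continuous branch with $y(0)=z$ is well-defined for all $t\geq 0$, not merely small $t$, consistent with the monotone growth of $\mathrm{Im}(Z_t)$ used elsewhere in the paper.
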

Therefore, the proposed high order numerical method for discretizing (\ref{backwardloewner}) is given by

\begin{definition}[Ninomiya-Victoir splitting of the backward Loewner equation]\label{nvscheme}
For a fixed number of steps $N$, we construct a numerical solution $\big\{\widetilde{Z}_{t_{k}}\big\}_{0\leq k \leq N}$ of (\ref{backwardloewner}) on $[0,T]$ by setting $\widetilde{Z}_{0} := \epsilon i$ and for each $k \in [0 \mathrel{{.}\,{.}}\nobreak N-1]$, defining $\widetilde{Z}_{t_{k+1}}$ using the below formula, 
\begin{align}\label{nvloewnerstep}
\widetilde{Z}_{t_{k+1}} :=  \sqrt{\left(\sqrt{\widetilde{Z}_{t_{k}}^{2} - 2h_k} + \sqrt{\kappa}B_{t_{k},t_{k+1}}\right)^{2} - 2h_k}\,,
\end{align}
where $\{0 = t_0 < t_1 < \cdots < t_N = T\}$ is a partition of $[0,T]$ and $h_k := t_{k+1}-t_k$.
\end{definition}

A surprising property is that the scheme preserves the second and fourth moments. 
\begin{theorem}
Let $\widetilde{Z}$ denote the numerical approximation of $Z$ given by Definition \ref{nvscheme},
where $Z$ is the true solution of (\ref{backwardloewner}) with the initial condition $Z_0 = \widetilde{Z}_{0}$. Then for $k\geq 0$,
\begin{align*}
\mathbb{E}\big[\widetilde{Z}_{t_k}^2\big] & = \mathbb{E}\big[Z_{t_k}^2\big],\\
\mathbb{E}\big[\widetilde{Z}_{t_k}^4\big] & = \mathbb{E}\big[Z_{t_k}^4\big].
\end{align*} 
\end{theorem}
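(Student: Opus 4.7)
The plan is to compute both sides of the claimed identity explicitly by producing a recursion for $\mathbb{E}[\widetilde Z_{t_k}^2]$ that matches the one for $\mathbb{E}[Z_{t_k}^2]$, with both recursions starting from the common initial value $\widetilde Z_0^2 = Z_0^2$.

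First I would compute $\mathbb{E}[Z_t^2]$ from the SDE \eqref{backwardloewner} directly. Applying It\^{o}'s formula to the holomorphic map $z \mapsto z^2$ and noting that $d\langle Z\rangle_t = \kappa\,dt$ (since $B_t$ is real and only the $\sqrt{\kappa}\,dB_t$ term contributes to the quadratic variation), I get
\begin{align*}
d(Z_t^2) \;=\; 2Z_t\,dZ_t + \kappa\,dt \;=\; (\kappa - 4)\,dt + 2\sqrt{\kappa}\,Z_t\,dB_t.
\end{align*}
Since $|Z_t|$ stays bounded on $[0,T]$ (the imaginary part is monotone and, together with an elementary bound on the real part, makes $Z_t$ square-integrable), the stochastic integral is a true martingale, and taking expectations yields $\mathbb{E}[Z_t^2] = Z_0^2 + (\kappa - 4)t$.

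Next I would expand the scheme in Definition \ref{nvscheme}. Squaring \eqref{nvloewnerstep} gives, purely algebraically (independent of the branches chosen for the square roots),
\begin{align*}
\widetilde Z_{t_{k+1}}^2 \;=\; \widetilde Z_{t_k}^2 - 4 h_k + 2\sqrt{\kappa}\,B_{t_k,t_{k+1}}\sqrt{\widetilde Z_{t_k}^2 - 2h_k} + \kappa\, B_{t_k,t_{k+1}}^2.
\end{align*}
The crucial observation is that $B_{t_k,t_{k+1}}$ is independent of $\mathcal{F}_{t_k}$, so conditioning on $\mathcal{F}_{t_k}$ kills the cross term $(\mathbb{E}[B_{t_k,t_{k+1}}] = 0)$ and turns the last term into $\kappa h_k$. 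Hence
\begin{align*}
\mathbb{E}\!\left[\widetilde Z_{t_{k+1}}^2 \,\middle|\, \mathcal{F}_{t_k}\right] \;=\; \widetilde Z_{t_k}^2 + (\kappa - 4)\,h_k.
\end{align*}
Taking unconditional expectations and iterating gives $\mathbb{E}[\widetilde Z_{t_k}^2] = \widetilde Z_0^2 + (\kappa - 4)t_k$, which matches the expression for $\mathbb{E}[Z_{t_k}^2]$ found above.

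There is no real obstacle; the only subtle point is to verify that the algebraic identity $(\sqrt{w})^2 = w$ applies uniformly to the complex square roots appearing in \eqref{nvloewnerstep}, so that the expansion of $\widetilde Z_{t_{k+1}}^2$ is valid regardless of branch choice, and to check integrability of $\sqrt{\widetilde Z_{t_k}^2 - 2h_k}\cdot B_{t_k,t_{k+1}}$ in order to apply conditional expectations (which follows because $\widetilde Z_{t_k}$ is $\mathcal{F}_{t_k}$-measurable and $B_{t_k,t_{k+1}}$ is an independent Gaussian with finite moments).
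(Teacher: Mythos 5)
Your proposal is correct and follows essentially the same route as the paper: square the one-step formula \eqref{nvloewnerstep}, use independence of the Brownian increment from $\mathcal{F}_{t_k}$ to get $\mathbb{E}[\widetilde Z_{t_{k+1}}^2]=\mathbb{E}[\widetilde Z_{t_k}^2]+(\kappa-4)h_k$, and compare with It\^{o}'s formula applied to $Z_t^2$, where the stochastic integral has zero expectation. The only quibble is your phrase ``$|Z_t|$ stays bounded on $[0,T]$'', which is literally false since $Z_t$ contains a Brownian term; the square-integrability you invoke in the parenthetical is the correct (and sufficient) justification, and is in fact more than the paper itself spells out.
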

\begin{proof}
As Brownian motion has independent increments, it follows directly from (\ref{nvloewnerstep}) that
\begin{align*}
\mathbb{E}\big[\widetilde{Z}_{t_{k+1}}^2\big] = \mathbb{E}\big[\widetilde{Z}_{t_k}^2\big] + (\kappa - 4)h_k,
\end{align*}
and so $\mathbb{E}\big[\widetilde{Z}_{t_k}^2\big] = \mathbb{E}\big[\widetilde{Z}_{0}^2\big] + (\kappa - 4)t_k$ for $k \in [0 \mathrel{{.}\,{.}}\nobreak N]$. On the other hand, It\^{o}'s lemma gives
\begin{align*}
d\big(Z_{t}^{2}\,\big) = (\kappa - 4)\,dt + 2Z_{t}\sqrt{\kappa}\,dB_t.
\end{align*}
Therefore
\begin{align*}
\mathbb{E}\big[Z_{t_{k}}^2\big] = \mathbb{E}\big[Z_{0}^2\big] +(\kappa - 4)t_{k} +  2\sqrt{\kappa}\,\mathbb{E}\bigg[\int_0^{t_k}Z_u\,dB_u\,\bigg].
\end{align*}
The first result follows as the above It\^{o} integral of $Z$ against $B$ will have zero expectation.
To see that the fourth moments of $Z$ and $\widetilde{Z}$ are identical, we note that by It\^{o}'s isometry
\begin{align*}
&\mathbb{E}\big[Z_{t_{k+1}}^4\big]\\
& = \mathbb{E}\bigg[\bigg(Z_{t_k}^2 + (\kappa - 4)h_k + 2\sqrt{\kappa}\int_{t_k}^{t_{k+1}}Z_u\,dB_u\bigg)^2\,\bigg]\\
& = \mathbb{E}\Big[\big(Z_{t_k}^2 + (\kappa - 4)h_k\big)^2\Big] + 4\sqrt{\kappa}\,\mathbb{E}\bigg[\big(Z_{t_k}^2 + (\kappa - 4)h_k\big)\int_{t_k}^{t_{k+1}}\hspace{-0.5mm}Z_u\,dB_u\bigg] + 4\kappa\hspace{-0.5mm}\int_{t_k}^{t_{k+1}}\hspace{-0.5mm}\E\big[Z_u^2\big]\,du\\
& = \mathbb{E}\big[Z_{t_k}^4\big] + 2(\kappa - 4)h_k\mathbb{E}\big[Z_{t_k}^2\big] + (\kappa - 4)^2 h_k^2 + 4\kappa\int_{t_k}^{t_{k+1}}\big(\E\big[Z_{t_k}^2\big] + (\kappa - 4)(u-t_k)\big)\,du\\
& = \mathbb{E}\big[Z_{t_k}^4\big] + (6\kappa - 8)h_k\mathbb{E}\big[Z_{t_k}^2\big] + (\kappa - 4)^2 h_k^2 + 2\kappa(\kappa - 4)h_k^2\,.
\end{align*}
On the other hand, using (\ref{nvloewnerstep}), it is straightforward to compute the fourth moment of $\widetilde{Z}$.
\begin{align*}
\mathbb{E}\big[\widetilde{Z}_{t_{k+1}}^4\big] & = \mathbb{E}\bigg[\bigg(\widetilde{Z}_{t_{k}}^{2}  - 4 h_k - 2\sqrt{\kappa}B_{t_{k},t_{k+1}}\sqrt{\widetilde{Z}_{t_{k}}^{2} - 2h_k} + \kappa B_{t_{k},t_{k+1}}^2\,\bigg)^{\hspace{-0.5mm} 2}\,\bigg]\\
& = \mathbb{E}\big[\widetilde{Z}_{t_{k}}^4\big] - 8h_k\mathbb{E}\big[\widetilde{Z}_{t_{k}}^2\big] +  16 h_k^2 + 4\kappa h_k\big(\mathbb{E}\big[\widetilde{Z}_{t_{k}}^2\big] - 2h_k\big)\\
&\hspace{16.5mm} + 2\kappa h_k\big(\mathbb{E}\big[\widetilde{Z}_{t_{k}}^2\big] - 4 h_k\big) + \kappa^2\hspace{0.25mm}\mathbb{E}\big[B_{t_k, t_{k+1}}^4\big]\\
& = \mathbb{E}\big[\widetilde{Z}_{t_{k}}^4\big]  + (6\kappa - 8)h_k\mathbb{E}\big[\widetilde{Z}_{t_k}^2\big] + 16 h_k^2 - 16\kappa h_k^2 + 3\kappa^2 h_k^2\,.
\end{align*}
The result now follows as $Z$ and $\widetilde{Z}$ have the same initial value and second moments. 
\end{proof}
\begin{remark}
These properties are especially appealing as they hold on any time horizon. 
\end{remark}

To simulate SLE traces we shall incorporate the above numerical scheme into the
adaptive step size methodology proposed in \cite{kennedy2009slesim}. That is, instead of ``tilted'' or ``vertical''
slits, we use the Ninomiya-Victoir scheme described above to approximate the SLE trace.\medbreak

$SLE_{\kappa}$ traces can be built from conformal maps $g_{t}$ given by forward Loewner's equation,
\begin{align}\label{actualloewner}
\frac{dg_{t}(z)}{dt} & = \frac{2}{g_{t}(z) - \sqrt{\kappa}\, B_{t}}\,,\\
g_{0}(z) & = z\,.\nonumber
\end{align}

The $SLE$ curve $\gamma(t)$ is then defined to have the property that $g_{t}(\gamma(t)) = \sqrt{\kappa}\, B_{t}$ for $t\geq 0$.
Therefore, after applying the change of variables $h_{t} = g_{t} - \sqrt{\kappa}\,B_{t}$, we see that $\gamma(t) = \lim_{y \to 0}h_{t}^{-1}(iy)$
where
\begin{align}\label{shiftedloewner}
dh_{t}(z) & = \frac{2}{h(z)}dt - \sqrt{\kappa}\,dB_{t},\\
h_{0}(z) & = z\,.\nonumber
\end{align}

The backward Loewner equation (\ref{backwardloewner}) on $[0,T]$ generates a curve $\tilde{\gamma}(t)$ that modulo a shift with $\sqrt{\kappa}B_T$ has the same law with the SLE trace $\gamma(t)$ (see \cite{rohde2016backward}). In addition, we can use the Ninomiya-Victoir scheme (\ref{nvloewnerstep}) to approximate the backward Loewner diffusion.
For simulations, the challenge is that the driving Brownian motion must be run backwards.
More concretely, if we fix a partition $0 = t_{0} < t_{1} < \cdots < t_{N} = T$,
then we can construct a numerical SLE trace $\big\{z_{t_{k}}\big\}_{0\leq k \leq N}$ by setting $z_{0} := 0$ and for $k \in [1 \mathrel{{.}\,{.}}\nobreak N]$ defining $\tilde{z}_{t_{k}}$ by
\begin{align}\label{numericalloewner}
z_{t_{k}} := f_{0}\circ f_{1} \circ \cdots \circ f_{k-1} (0)\,,
\end{align} 
where
\begin{align*}
f_{i}(z) :=  \sqrt{\left(\sqrt{z^{2} - 2(t_{i+1}-t_{i})} + \sqrt{\kappa}B_{t_{i+1}, t_{i}}\right)^{2} - 2(t_{i+1}-t_{i})}\,.
\end{align*}\smallbreak

As discussed in \cite{kennedy2009slesim}, due to the singularity at 0 inherent in the conformal maps $g_{t}$,
simulating SLE traces using a fixed uniform partition can lead to huge numerical errors.
Instead, an adaptive step size methodology was recommended, especially when $\kappa$ is large.
The idea is to ensure that $\big|z_{t_{k+1}} - z_{t_{k}}\big| < C$ for each $k$, where $C$ is a user-specified tolerance.
To achieve this, we follow precisely the same adaptive step size strategy as proposed in \cite{kennedy2009slesim}. That is, we start by computing $\{z_{t_k}\}_{k\hspace{0.25mm}\geq\hspace{0.25mm} 0}$ along a uniform partition until $\big|z_{t_{k+1}} - z_{t_{k}}\big| \geq C$.
If this occurs, it indicates that we should reduce the step size for the SLE discretization. Therefore, we shall sample the Brownian path at the midpoint of the interval $[t_{k}, t_{k+1}]$.
(This can be done using a Brownian bridge conditioned on the values of $B$ at $t_{k}$ and $t_{k+1}$)
We now proceed as before, except we have added the midpoint of $[t_{k}, t_{k+1}]$ to the partition.
This process continues (recursively) until each value of $\big|z_{t_{k+1}} - z_{t_{k}}\big|$ is strictly less than $C$.
\begin{figure}[h]
\begin{center}
\includegraphics[width=0.96\textwidth]{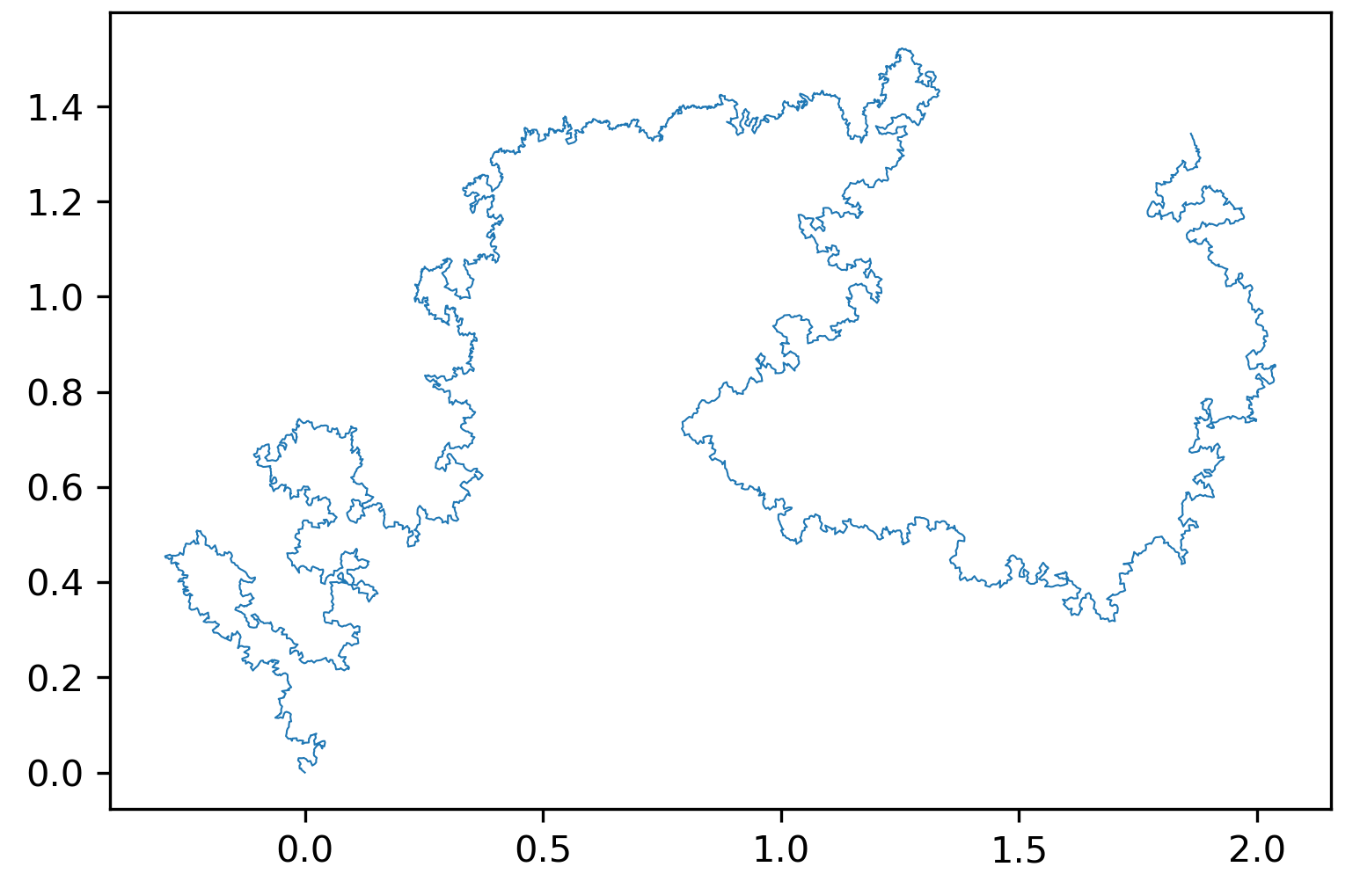}
\caption{A sample of $SLE_{\kappa}$ trace for $\kappa=8/3$ with 5103 points.}
\end{center}\vspace{-8.5mm}
\end{figure}

Figures 6.1 and 6.2 demonstrate that the proposed numerical method can generate realistic simulations of the $SLE_\kappa$ trace, even for larger values of $\kappa$. In Section 4 of \cite{tran2015convergence}, the author claims that the vertical slit method proposed by Kennedy \cite{kennedy2009slesim} converges to the $SLE_\kappa$ trace when fixed step sizes are used. Since each step of the Ninomiya-Victoir scheme is a composition of two vertical slits, we also expect such a convergence result.
However in practice, one observes significantly improved performance when adaptive step sizes are used \cite{kennedy2009slesim}. Therefore in our final section, we shall be considering the latter setting.\vspace{-5mm}

\begin{figure}[H]
\begin{center}
\includegraphics[width=0.975\textwidth]{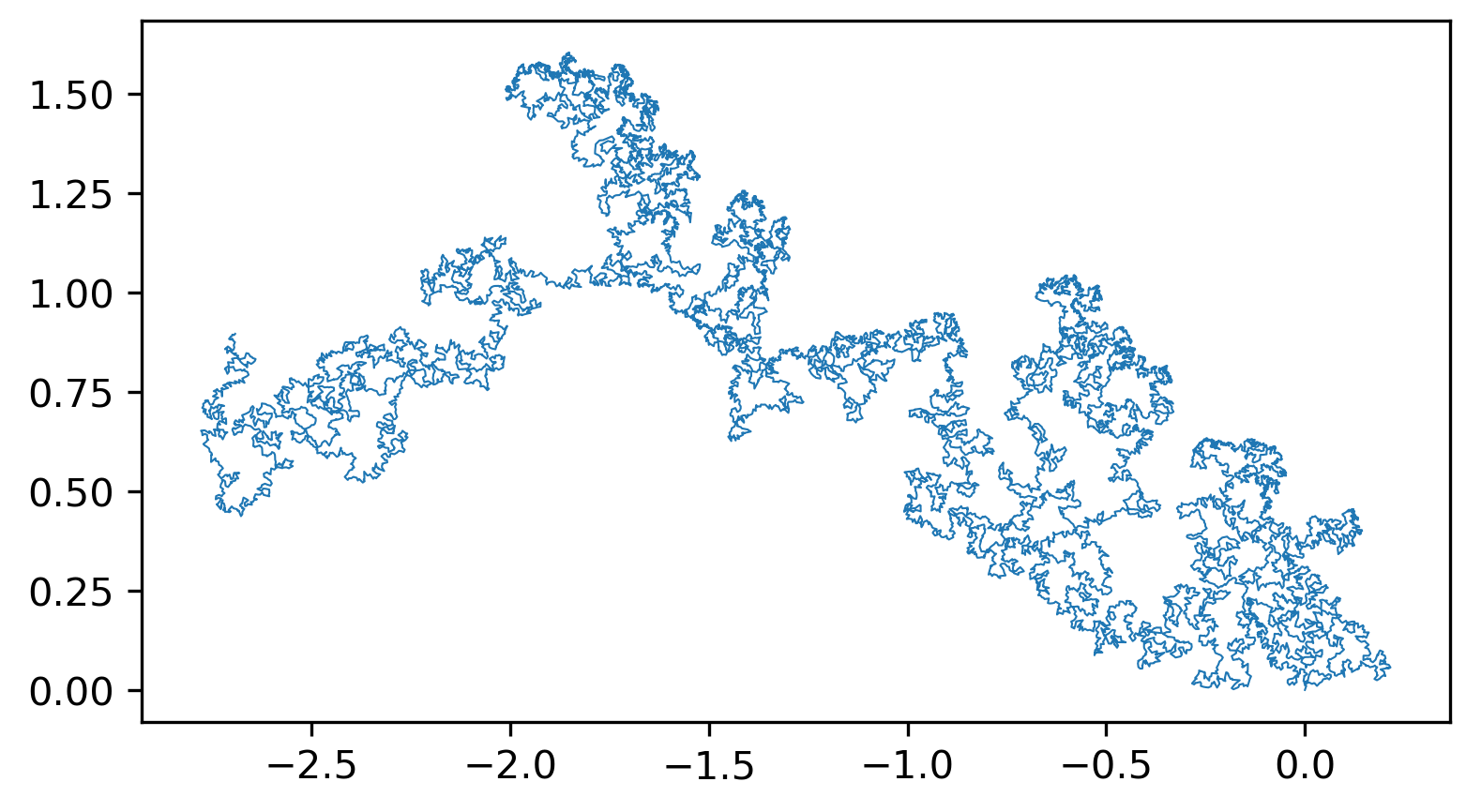}
\caption{A sample of $SLE_{\kappa}$ trace for $\kappa=6$ with 17884 points.}
\end{center}
\end{figure}\vspace{-14mm}
This approach naturally leads to the open problem of whether alternative high order 
``ODE-based'' methods can be applied to SLE simulation (such as those presented in \cite{foster2020odemethod}).

\subsection{Error analysis of the scheme}

In this subsection, we will perform an error analysis of the Ninomiya-Victoir scheme to the SLE trace when $\kappa\neq 8$.  To this end, we define the constants $a=2/\kappa$, $r=a+(1/4) < 2a+1$ and 
$b=\frac{(1+2a)r-r^2}{a}=a+1+\frac{3}{16a}$. 
\begin{proposition}[Proposition $7.3$ in \cite{lawler2008conformally}]\label{coro}
For every $0 \leq r \leq 2a+1\,,$ there is a finite $c=c(a,r)$ such that for all $0 \leq t \leq 1$, $0 \leq y_0 \leq 1\,,$ $e \leq \lambda \leq y_0^{-1}\,,$ we have that
$$\mathbb{P}\big(|Z_{t}^\prime(z_0)| \geq \lambda\big) \leq \lambda^{-b}(|z_0|/y_0)^{2r}\delta(y_0, \lambda)\,, $$
where $b=[(2a+1)r-r^2]/a \geq 0$ and
\[
    \delta(y_0, \lambda)= 
\begin{cases}
    \lambda^{(r/a)-b},& \text{if }   r < ab\,,\\
    -\log (\lambda y_0), & \text{if} \hspace{1mm} r = ab\,,\\
    y_0^{b-(r/a)}, & \text{if }  r > ab\,.
\end{cases}
\]
\end{proposition}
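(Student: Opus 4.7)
The plan is to deploy the standard SLE derivative-estimate machinery based on polar decomposition, a Lamperti-type time change, and a Girsanov transform driven by a carefully chosen positive martingale. Writing $Z_t(z_0) = R_t e^{i\theta_t}$, $Y_t = R_t\sin\theta_t$, and $L_t := \log|Z_t'(z_0)|$, one checks from the Loewner SDE that $R_t$, $Y_t$ and $L_t$ all evolve by pure drift, with coefficients that are explicit rational functions of $\theta_t$, while $\theta_t$ itself is a diffusion with a $\sqrt{\kappa}\,dB_t$ noise term. The time change $d\sigma = R_t^{-2}\,dt$ makes the angular process $\hat\theta_\sigma$ an autonomous diffusion on $(0,\pi)$ with constant noise coefficient, and reduces the target event $\{|Z_t'(z_0)|\geq\lambda\}$ to a hitting question for a functional of $\hat\theta$.

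Next I would construct a positive local martingale of the separated form $M_t = R_t^{\alpha}Y_t^{\beta}|Z_t'(z_0)|^{\gamma}$, choosing $(\alpha,\beta,\gamma)$ so that the It\^o drift vanishes. The two algebraic constraints leave a one-parameter family; parametrising it by $r$ reproduces $b = [(2a+1)r - r^2]/a$ as the exponent of $|Z_t'(z_0)|$ and $2r$ as the exponent of $|z_0|/y_0$ in $M_0$. I would then use $M$ as a Girsanov density to define a measure $\widetilde{\mathbb{P}}$; under this new measure $\hat\theta$ becomes a Bessel-type diffusion on $(0,\pi)$, corresponding to the two-sided radial $SLE_{\kappa}(\rho)$ picture for a suitable $\rho = \rho(r)$. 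Optional stopping at $\tau := \inf\{s : L_s = \log\lambda\}$, combined with the inclusion $\{|Z_t'(z_0)|\geq\lambda\}\subset\{\tau\leq t\}$, gives
\begin{align*}
\mathbb{P}\big(|Z_t'(z_0)|\geq\lambda\big) \;=\; \widetilde{\mathbb{E}}\bigg[\frac{M_0}{M_\tau}\,\mathbf{1}_{\{\tau\leq t\}}\bigg],
\end{align*}
and the explicit form of $M$ extracts the leading prefactor $\lambda^{-b}(|z_0|/y_0)^{2r}$ exactly.

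The remaining factor $\delta(y_0,\lambda)$ then emerges from estimating the surviving $\widetilde{\mathbb{P}}$-expectation, which is a hitting-time / scale-function question for the one-dimensional diffusion $\hat\theta$. The trichotomy $r < ab$, $r = ab$, $r > ab$ corresponds precisely to the transient, null-recurrent, and positive-recurrent regimes of $\hat\theta$ with respect to the boundary of $(0,\pi)$, and these are what produce the three functional forms of $\delta(y_0,\lambda)$, the logarithm being the standard signature of the critical (null-recurrent) case. The main obstacle is the algebraic calibration in the second step: one has to pick the one-parameter family of exponents so that $M$ is a \emph{true} positive martingale (not merely local), which requires a Novikov/localisation argument adapted to the boundary behaviour of $\hat\theta$, and so that the coefficients match the stated $b$ and $2r$ exactly. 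The critical case $r = ab$ requires a more delicate hitting-time estimate to produce the logarithmic correction.
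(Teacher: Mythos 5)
This proposition is not proved in the paper at all: it is quoted verbatim from Lawler's book (Proposition 7.3 of \cite{lawler2008conformally}), so there is no internal argument to compare you against; the benchmark is the standard reverse-flow proof in that reference and in the related Lawler--Johansson Viklund derivative-estimate literature. Your outline does mirror that standard proof: a one-parameter family (indexed by $r$) of local martingales built from $|Z_t'|$, $Y_t=\operatorname{Im}Z_t$ and $\sin\theta_t=Y_t/|Z_t|$, whose exponents produce $b=[(2a+1)r-r^2]/a$ and the factor $(|z_0|/y_0)^{2r}$; a Girsanov tilt under which the time-changed angular process is a radial Bessel-type diffusion on $(0,\pi)$ (the two-sided radial $SLE_\kappa(\rho)$ picture); and optional stopping at the first time $\log|Z_t'|$ reaches $\log\lambda$.

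That said, as a proof the proposal has one concrete error and one decisive gap. First, the claim that $R_t$ evolves by pure drift is false: with $dZ_t=-(2/\kappa)Z_t^{-1}\,dt+dB_t$, the horizontal noise enters both $R_t$ (a $\cos\theta_t\,dB_t$ term) and $\theta_t$ (a $-R_t^{-1}\sin\theta_t\,dB_t$ term); only $Y_t$ and $L_t=\log|Z_t'|$ are drift-only. This matters because the drift-cancellation equations that calibrate $(\alpha,\beta,\gamma)$ in $M_t=R_t^{\alpha}Y_t^{\beta}|Z_t'|^{\gamma}$ involve the quadratic variation of $R_t$ and its cross-variation with $\theta_t$; dropping them yields the wrong exponent family, so the claimed match with $b$ and $2r$ is not actually verified. (Also, since $\{|Z_t'(z_0)|\geq\lambda\}\subset\{\tau\leq t\}$ is a strict inclusion in general, your displayed identity should be an inequality, $\mathbb{P}(|Z_t'(z_0)|\geq\lambda)\leq\widetilde{\mathbb{E}}[(M_0/M_\tau)\mathbf{1}_{\{\tau\leq t\}}]$, obtained after localizing so that $M$ stopped at $\tau\wedge t$ is a genuine martingale.) Second, and more importantly, everything that makes the proposition nontrivial --- the correction factor $\delta(y_0,\lambda)$, the three regimes $r<ab$, $r=ab$, $r>ab$, the precise exponents $\lambda^{(r/a)-b}$ and $y_0^{b-(r/a)}$, the logarithm in the critical case, and the role of the hypotheses $t\leq 1$, $y_0\leq 1$, $e\leq\lambda\leq y_0^{-1}$ --- is compressed into a qualitative remark about transience versus recurrence of $\hat\theta$. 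The actual argument requires quantitative estimates under the tilted measure: moments of $\sin\hat\theta$ for the radial Bessel process (equivalently a dyadic decomposition over the scale of $Y_\tau$, using that $Y$ is increasing so $Y_\tau\geq y_0$, together with the upper bound coming from $t\leq 1$), and it is exactly this computation that produces the three functional forms of $\delta$. Without it, your argument only yields the prefactor $\lambda^{-b}(|z_0|/y_0)^{2r}$, not the stated bound.
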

So in the regime $r <ab\,,$ by Proposition \ref{coro}, there exists $\epsilon > 0 $ such that
\begin{align*}
\mathbb{P} \big(|Z'_t(i2^{-j})| \geq 2^{j-\epsilon}\big) \leq c2^{-j(2b-(r/a))(1-\epsilon)}\,,
\end{align*}
for $j\geq 1$. Thus, we can apply Borel-Cantelli argument provided that $a \neq 1/4$, i.e. $\kappa \neq 8$, to obtain that for all values of $t \in [0,T]$ (by scaling is enough to look only for $t \in [0,1]$) 
we have that almost surely:
\begin{align*}
\big|Z^\prime_{k2^{-2j}}(i2^{-j})\big| \leq c2^{j-\epsilon}\,,
\end{align*}
for $j\in\{1,2,\ldots\}$ and  $k\in\{0,1, \ldots, 2^{2j}\}$.

The result gives the estimate on the derivative of the conformal map $h_t(z)$ at the dyadic times in the interval $[0,1]$. Using the Distortion Theorem (see Lemma $2.2$ in \cite{viklund2014continuity}) we extend the result for all the points. Moreover, using the estimate on the derivative of the map, we obtain that for almost every Brownian path, $|\tilde{\gamma}(t)-Z_t(iy)|\leq \int_0^y |Z^\prime_t(ir)|dr$ for $t \in [0, 1]$, where $\tilde{\gamma}(t)=\lim_{y \to 0}Z_t(iy)$. In particular, this leads to the following lemma.

\begin{lemma}\label{approx_for_trace_thm} There exists $\epsilon > 0$ such that
\begin{align}\label{approx_for_trace}
\sup\limits_{t\in[0,1]}\big|\tilde{\gamma}(t)-Z_t(iy)\big|\leq C(\omega) y^{1-\epsilon}
\end{align}
where the constant depends on $\omega$ but is finite almost surely.
\end{lemma}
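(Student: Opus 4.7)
The plan is to make rigorous the four-step outline already given in the paragraphs immediately preceding the lemma statement. The steps are: (i) apply Proposition \ref{coro} on a dyadic grid in $(t,y)$ to get a probabilistic bound on $|Z'_t(iy)|$; (ii) use Borel--Cantelli to upgrade this to an almost-sure dyadic bound; (iii) extend the bound from the dyadic grid to all $(t,y) \in [0,1] \times (0,1]$ via Koebe's distortion theorem, in the Loewner-flow form given as Lemma 2.2 of \cite{viklund2014continuity}; and (iv) integrate along the vertical ray $\{ir : 0 < r \leq y\}$ to control the distance $|\tilde\gamma(t) - Z_t(iy)|$.

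For (i) and (ii), fix $r$ in the interval $(0, ab)$, which is nonempty precisely because $\kappa \neq 8$ (equivalently $a \neq 1/4$). Applying Proposition \ref{coro} with $y_0 = 2^{-j}$ and a threshold of the form $\lambda = 2^{\alpha j}$ for some small $\alpha > 0$, and then taking a union bound over the $O(2^{2j})$ dyadic times $t_k = k\, 2^{-2j} \in [0,1]$, one checks using the explicit exponent $b = [(2a+1)r - r^2]/a$ and the form of $\delta(y_0, \lambda)$ that the total probability decays faster than any fixed negative power of $2^j$ once $\alpha$ is chosen small enough. In particular the sum over $j$ is finite, and Borel--Cantelli produces an almost-surely finite random variable $J(\omega)$ such that
\begin{equation*}
\bigl|Z'_{k\, 2^{-2j}}(i\, 2^{-j})\bigr| \leq 2^{\alpha j} \quad \text{for all } j \geq J(\omega) \text{ and all } 0 \leq k \leq 2^{2j}.
\end{equation*}

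For (iii), given any $t \in [0,1]$ and $y \in (0, 2^{-J(\omega)}]$, choose $j$ so that $y \in [2^{-(j+1)}, 2^{-j}]$ and let $t_k$ be the nearest dyadic time on the grid of spacing $2^{-2j}$. Since $Z_t$ is conformal on $\mathbb{H}$ and the backward flow is smooth in $t$, the distortion estimate of Lemma 2.2 of \cite{viklund2014continuity} asserts that $|Z'_t(iy)|$ is comparable to $|Z'_{t_k}(i\, 2^{-j})|$ up to a universal multiplicative constant. Combined with step (ii), this gives $|Z'_t(iy)| \leq C(\omega) y^{-\alpha}$ uniformly over $(t, y) \in [0,1] \times (0,1]$, after absorbing finitely many terms with $j < J(\omega)$ into $C(\omega)$.

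For (iv), the almost-sure existence of the trace gives $\tilde\gamma(t) = \lim_{y' \to 0} Z_t(iy')$, and the fundamental theorem of calculus along the imaginary axis yields
\begin{equation*}
|\tilde\gamma(t) - Z_t(iy)| \;\leq\; \int_0^y |Z'_t(ir)| \, dr \;\leq\; C(\omega) \int_0^y r^{-\alpha} \, dr \;=\; \frac{C(\omega)}{1 - \alpha}\, y^{1 - \alpha}.
\end{equation*}
Taking the supremum over $t \in [0,1]$ and relabelling $\epsilon := \alpha$ produces the claimed bound. The main obstacle is step (iii): one must ensure that the distortion estimate applies uniformly in both time $t$ and height $y$ simultaneously, and that the power of $y$ it produces is compatible with the power of $2^{-j}$ coming out of Borel--Cantelli in step (ii). This is why one invokes Lemma 2.2 of \cite{viklund2014continuity} rather than a bare Koebe estimate, since the former already packages the required uniformity for maps generated by Loewner's equation.
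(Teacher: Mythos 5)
Your outline (dyadic grid in $(t,y)$, Borel--Cantelli, distortion theorem, then integration of $|Z_t'(ir)|$ along the vertical ray) is exactly the route the paper takes, but there is a genuine quantitative error in steps (i)--(ii) that breaks the argument. With $y_0 = 2^{-j}$, $z_0 = i2^{-j}$ and $\lambda = 2^{\alpha j}$, Proposition \ref{coro} in the regime $r<ab$ gives $\mathbb{P}\big(|Z_t'(i2^{-j})|\geq 2^{\alpha j}\big)\leq c\,\lambda^{-b}\lambda^{(r/a)-b} = c\,2^{-\alpha j(2b - r/a)}$: this decays at a \emph{fixed} polynomial rate proportional to $\alpha$, not ``faster than any fixed negative power of $2^j$''. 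After the union bound over the $2^{2j}$ dyadic times you need $\alpha\,(2b - r/a) > 2$, and since the constraint $\lambda \leq y_0^{-1}$ forces $\alpha \leq 1$, summability requires $2b - r/a > 2$ and $\alpha$ \emph{close to $1$}, not small. With the paper's choice $r = a + 1/4$ one has $2b - r/a = 2a + 1 + \tfrac{1}{8a} \geq 2$ with equality exactly at $a = 1/4$, i.e.\ $\kappa = 8$; this is the actual role of the hypothesis $\kappa \neq 8$ (your claim that $\kappa\neq 8$ makes the interval $(0,ab)$ nonempty is also off, since $b$ depends on $r$ and $r<ab$ just means $r<2a$). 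So taking $\alpha$ small, as you propose, makes the Borel--Cantelli series divergent rather than convergent, and the resulting uniform bound $|Z_t'(iy)|\leq C(\omega)y^{-\alpha}$ for arbitrarily small $\alpha$ is not obtainable this way (and is not true in general for larger $\kappa$).

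The correct conclusion of the dyadic/Borel--Cantelli/distortion argument is the weaker bound $|Z_t'(iy)| \leq C(\omega)\, y^{-(1-\epsilon)}$ for some small $\epsilon>0$ depending on $\kappa$, uniformly over $t\in[0,1]$ and $0<y\leq 1$. Your step (iv) then gives
\begin{align*}
\sup_{t\in[0,1]}\big|\tilde{\gamma}(t)-Z_t(iy)\big| \;\leq\; \int_0^y |Z_t'(ir)|\,dr \;\leq\; \frac{C(\omega)}{\epsilon}\, y^{\epsilon},
\end{align*}
i.e.\ \emph{some} positive power of $y$, which is what the lemma asserts after relabelling the exponent as $1-\epsilon$; your stated conclusion $y^{1-\alpha}$ with $\alpha$ arbitrarily small is a substantially stronger (near-Lipschitz) claim that the available estimates do not support. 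Steps (iii) and (iv) are otherwise in line with the paper, which likewise invokes Lemma 2.2 of \cite{viklund2014continuity} to pass from the dyadic grid to all $(t,y)$ and then integrates the derivative bound.
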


It is now worth nothing that for any $y>0$, the dynamics of $t\mapsto Z_t(iy)$ is given by an SDE whose vector fields are smooth, bounded and with bounded derivatives since $\text{Im}(Z_t(iy))$ is increasing in $t$. Most notably, this gives the simple estimate $\text{Im}(Z_t(iy)) \geq y$.
Any discretized process $\{\widetilde{Z}_{t_k}(iy)\}_{k\geq 0}$ obtained via the Ninomiya-Victoir scheme will also enjoy this lower bound on its imaginary part as
\begin{align*}
\text{Im}(z) & \leq \text{Im}\left(\sqrt{z^2 - c}\,\right),\\
\text{Im}(z) & = \text{Im}(z+c),
\end{align*}
for $z\in\mathbb{H}$ and $c \geq 0$. So when $y>0$, we can apply standard results from the SDE literature to $Z$ and $\widetilde{Z}$, which will establish convergence for the Ninomiya-Victoir scheme.
In particular, the results of \cite{lyonsgaines1997variable} give theoretical guarantees for choosing step sizes based on the Brownian path itself whereas previous results on the convergence of algorithms simulating Loewner curves only consider predetermined and uniform time stepping \cite{tran2015convergence}. Moreover, it was shown in \cite{kennedy2009slesim} that adaptive steps can significantly improve performance for SLE simulation. Whilst our analysis concerns an adaptive Ninomiya-Victoir scheme, the following two theorems may also extend to the adaptive power series approximation proposed by Tom Kennedy in \cite{kennedy2009slesim}, where convergence results were not formally established.

\begin{theorem}\label{variable_step_thm} For almost all $\omega\in\Omega$ and any $\epsilon, y > 0$, there exists $\delta > 0$ such that for every partition $\mathcal{D} = \{0 = t_0 < t_1 < \cdots < t_N = 1\}$ of $[0,1]$ with $\,\mesh\mathcal{D} \leq \delta$, we have
\begin{align*}
\sup_{t_k\in\mathcal{D}}\big|Z_{t_k}(iy) - \widetilde{Z}_{t_k}(iy)\big| \leq \epsilon,
\end{align*}
where $\{\widetilde{Z}_{t_k}(iy)\}_{k\geq 0}$ denotes the Ninomiya-Victoir discretization of $Z$ started at $\widetilde{Z}_0 = iy$ and computed using the points in $\mathcal{D}$.
\end{theorem}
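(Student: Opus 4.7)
The plan is to reduce the theorem to a pathwise continuity statement for the Itô–Lyons solution map, which requires first removing the singularity at the origin and then realising the Ninomiya–Victoir iteration as an RDE driven by a deterministic path that approximates the space–time Brownian motion.

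First, I would exploit the observation preceding the theorem that for fixed $y>0$ both $Z_t(iy)$ and its discretisation $\widetilde{Z}_{t_k}(iy)$ remain in the closed half-plane $\{z\in\mathbb{C}:\text{Im}(z)\geq y\}$. On this region the singular drift $V_0(z)=-2/z$ is analytic, uniformly bounded by $2/y$, and has uniformly bounded derivatives of every order, while $V_1=\sqrt{\kappa}$ is constant. Multiplying $V_0$ by a smooth cutoff that equals $1$ on $\{\text{Im}(z)\geq y\}$ and is supported in $\{\text{Im}(z)\geq y/2\}$ produces a globally $C^{\infty}_b$ extension $\widetilde{V}_0$ that alters neither the true solution nor the numerical scheme. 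From this point onward I would therefore assume the vector fields are smooth and bounded on $\mathbb{R}^2\cong\mathbb{C}$, with bounded derivatives of all orders.

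Next, I would recast the Ninomiya–Victoir iteration as the time-$1$ map of an RDE driven by a deterministic, continuous, piecewise-linear path. Over each sub-interval $[t_k,t_{k+1}]$ the composition $\exp(\tfrac{1}{2}h_k V_0)\exp(B_{t_k,t_{k+1}}V_1)\exp(\tfrac{1}{2}h_k V_0)$ is the terminal value of an ODE driven by the concatenation of three constant-velocity segments in $(dt,dB)$-space: a pure $t$-step of length $h_k/2$, then a pure $B$-step of length $B_{t_k,t_{k+1}}$, then another pure $t$-step of length $h_k/2$. Concatenating these across $\mathcal{D}$ produces a continuous path $X^{\mathcal{D}}:[0,1]\to\mathbb{R}^2$ that interpolates the space–time Brownian motion $X_t=(t,B_t)$ at every partition point and for which $\widetilde{Z}_{t_k}(iy)$ coincides with the RDE solution of $d\widetilde{Z}=V(\widetilde{Z})\,dX^{\mathcal{D}}$ evaluated at $t=t_k$. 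The conclusion then follows from the continuity of the Itô–Lyons solution map: for $C^{\infty}_b$ vector fields, the map sending a rough path to its RDE solution is locally Lipschitz in the $p$-variation topology \cite{friz2010multidimensional}. Letting $\mathbf{X}$ denote the Stratonovich rough-path lift of $X=(t,B)$ and $\mathbf{X}^{\mathcal{D}}$ the canonical lift of $X^{\mathcal{D}}$, a Wong–Zakai argument in the spirit of \cite{lyonsgaines1997variable} should give $\mathbf{X}^{\mathcal{D}}\to\mathbf{X}$ in $p$-variation, almost surely, as $\mesh\mathcal{D}\to 0$ for any $p>2$. Continuity of the solution map then delivers $\sup_{t\in[0,1]}|Z_t(iy)-\widetilde{Z}^{\mathcal{D}}_t(iy)|\to 0$ almost surely, which yields the theorem upon restriction to the partition points.

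The hard part will be the Wong–Zakai step: the ``alternating'' lift $\mathbf{X}^{\mathcal{D}}$, whose second-level iterated integrals differ from those of the ordinary piecewise-linear interpolation of $X$ because time and Brownian increments are traversed sequentially rather than jointly, must still converge in $p$-variation to the Stratonovich lift. The non-trivial corrections are confined to the cross-terms $\int dt\otimes dB$ and $\int dB\otimes dt$ over each sub-interval, each of which contributes a deterministic error of order $h_k\cdot B_{t_k,t_{k+1}}$; the resulting $p$-variation contribution vanishes by combining $\mesh\mathcal{D}\to 0$ with the Hölder modulus of Brownian motion. This pathwise framework is essential because the partition $\mathcal{D}$ is allowed to depend on $\omega$, so that the $L^p$-type strong convergence results of \cite{gjc2016strongconv} cannot be applied directly.
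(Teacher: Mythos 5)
Your argument is essentially correct, but it follows a genuinely different route from the paper. The paper's proof also begins from the observation you use (for $y>0$ both $Z$ and $\widetilde{Z}$ stay in $\{\mathrm{Im}(z)\geq y\}$, where the vector fields are smooth and bounded), but then simply Taylor-expands one Ninomiya--Victoir step as $\widetilde{Z}_t = \widetilde{Z}_s - \tfrac{2}{\widetilde{Z}_s}h + \sqrt{\kappa}B_{s,t} + o(h)$ --- emphasizing that the usual $B_{s,t}^2$ term is absent because the noise vector field is constant --- and then invokes Theorem 4.3 and Corollary 4.4 of \cite{lyonsgaines1997variable}, which are pathwise convergence results designed precisely for step sizes that may depend on the Brownian path. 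Your route instead realizes the scheme as an RDE driven by the axis-aligned interpolation $X^{\mathcal{D}}$ of $(t,B_t)$ and concludes via the local Lipschitz continuity of the It\^{o}--Lyons map together with a Wong--Zakai statement for $X^{\mathcal{D}}$; the cutoff extension of $V_0$ to a global $C^\infty_b$ field is a sensible addition, and it is legitimate because the intermediate ODE flows of each step also preserve $\{\mathrm{Im}(z)\geq y\}$. What the paper's approach buys is brevity: the one-step expansion plus the Gaines--Lyons citation does all the work, including the uniformity over $\omega$-dependent partitions. What your approach buys is transparency about exactly why that uniformity holds here: since one driving component is time and the Brownian component is one-dimensional, there is no genuine L\'{e}vy-area obstruction, and the second-level discrepancies are confined to cross-terms of size $O\bigl(h_k\,\mathrm{osc}(B,h_k)\bigr)$, so the rough-path distance is controlled by the mesh and the modulus of continuity of $B$ uniformly over all partitions --- a point worth stressing, because for multidimensional drivers a Wong--Zakai limit along path-dependent partitions can fail, so your ``hard part'' is real and your reason it is harmless here (only the $\int dt\otimes dB$ and $\int dB\otimes dt$ terms are perturbed) is exactly the right one. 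To be fully complete you would still need to upgrade the per-step estimates to convergence in the $p$-variation rough-path metric (via uniform $p$-variation bounds and interpolation), but this is standard.
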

\begin{proof}
As noted previously, the backward Loewner equation (\ref{backwardloewner}) has smooth and bounded vector fields whenever $Z_0 = iy$ with $y>0$. Moreover, both the diffusion $Z$ and its discretization $\widetilde{Z}$ lie in the domain $\{z\in\mathbb{C} : \text{Im}(z) \geq y\}$. Finally, we note that over an interval $[s,t]$ of size $h = t-s$, the Ninomiya-Victoir scheme admits the Taylor expansion:
\begin{align}\label{nvtaylor}
\widetilde{Z}_t = \widetilde{Z}_s - \frac{2}{\widetilde{Z}_{s}}\,h + \sqrt{\kappa}B_{s,t} + o(h),
\end{align}
provided $s$ and $t$ are sufficiently close together. For general SDEs driven by a single Brownian motion, this expansion would have a $B_{s,t}^2$ term preceding the $o(h)$ remainder \cite{ninomiyavictoir2008origin}.
The result now directly follows from (\ref{nvtaylor}) using Theorem 4.3 and Corollary 4.4 in \cite{lyonsgaines1997variable}.
\end{proof}
\begin{remark}
When $|\widetilde{Z}_s|$ is small, we expect 
a step size of $O(|\widetilde{Z}_s|^{2+})$ to give an accurate approximation of the true diffusion started from $\widetilde{Z}_s$ (by our main result, Theorem \ref{theoremTaylor}). 
\end{remark}
We can now establish a global error estimate for the Ninomiya-Victoir scheme (\ref{nvloewnerstep}).
\begin{theorem}
For almost all $\omega\in\Omega$ and any $\epsilon > 0$, there exists $y > 0$ and $\delta > 0$ such that for every partition $\mathcal{D} = \{t_0 < t_1 < \cdots < t_N\}$ of $[0,1]$ with $\,\mesh\mathcal{D} \leq \delta$, we have
\begin{align*}
\sup_{t_k\in\mathcal{D}}\big|\tilde{\gamma}(t) - \widetilde{Z}_{t_k}(iy)\big| \leq \epsilon.
\end{align*}
\end{theorem}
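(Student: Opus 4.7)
The plan is to combine the two preceding results via a triangle inequality. Specifically, for each partition point $t_k \in \mathcal{D}$, I would write
\begin{align*}
\big|\tilde{\gamma}(t_k) - \widetilde{Z}_{t_k}(iy)\big| \;\leq\; \big|\tilde{\gamma}(t_k) - Z_{t_k}(iy)\big| \;+\; \big|Z_{t_k}(iy) - \widetilde{Z}_{t_k}(iy)\big|,
\end{align*}
and then bound each term separately. The first term is controlled by Lemma \ref{approx_for_trace_thm}, which (for some fixed $\epsilon_0 > 0$) yields an a.s. finite $C(\omega)$ with $\sup_{t\in[0,1]}|\tilde{\gamma}(t) - Z_t(iy)| \leq C(\omega)y^{1-\epsilon_0}$. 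The second term is controlled by Theorem \ref{variable_step_thm}, which for any fixed $y>0$ provides a mesh threshold under which the Ninomiya-Victoir discretization is uniformly close to the true diffusion started at $iy$.

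First I would work pathwise: fix $\omega$ lying in the full-measure set on which both Lemma \ref{approx_for_trace_thm} and Theorem \ref{variable_step_thm} are valid, so that $C(\omega) < \infty$. Given the target accuracy $\epsilon > 0$, I would choose $y = y(\omega, \epsilon) > 0$ small enough that
\begin{align*}
C(\omega)\,y^{1-\epsilon_0} \;\leq\; \tfrac{\epsilon}{2}.
\end{align*}
This step uses only that $1-\epsilon_0 > 0$ so the right-hand side can be made arbitrarily small by shrinking $y$. With this $y$ now fixed, I would invoke Theorem \ref{variable_step_thm} with tolerance $\epsilon/2$ to produce a mesh size $\delta = \delta(\omega, y, \epsilon) > 0$ ensuring $\sup_{t_k \in \mathcal{D}}\big|Z_{t_k}(iy) - \widetilde{Z}_{t_k}(iy)\big| \leq \epsilon/2$ whenever $\mesh\mathcal{D} \leq \delta$. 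Adding the two halves closes the estimate.

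The order of quantifiers is what makes the argument work: $y$ must be chosen \emph{before} $\delta$, because the admissible mesh size from Theorem \ref{variable_step_thm} depends on $y$ through the bounds on the vector field $-2/z$ and its derivatives on the region $\{\im(z)\geq y\}$ (which blow up as $y \to 0$). The main subtlety to take care of is that the constants $C(\omega)$ and $\delta(\omega, y)$ are both random; this is why the statement is almost sure rather than uniform in $\omega$, and why one must commit to a single full-measure event at the outset on which both of the preceding results hold simultaneously. Beyond this quantifier-ordering point, the proof reduces to a clean two-line triangle inequality argument and should be short.
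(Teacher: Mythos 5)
Your argument is correct and is exactly what the paper intends: its proof is the one-line observation that the theorem follows immediately from Lemma \ref{approx_for_trace_thm} and Theorem \ref{variable_step_thm}, i.e., the triangle-inequality combination you spell out, with $y$ chosen first to control the trace approximation and then $\delta$ chosen (depending on $y$) to control the discretization error. Your version simply makes the quantifier ordering explicit, which the paper leaves implicit.
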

\begin{proof}
The result is an immediate consequence of Lemma \ref{approx_for_trace_thm} and Theorem \ref{variable_step_thm}.
\end{proof}
\begin{remark}
We expect the discretized process $\widetilde{Z}$ to satisfy an estimate similar to (\ref{approx_for_trace}). Such a result would then establish convergence for the process $\widetilde{Z}(0)$. This conjecture is supported by numerical evidence where traces were computed using an initial value of $0$.
\end{remark}

\bibliographystyle{plain}
\bibliography{references}

\end{document}